\theoremstyle{definition}
\newtheorem{definition}{Definition}
\newtheorem{remark}{Remark}
\newtheorem{problem}{Problem}
\theoremstyle{plain}
\newtheorem{theorem}{Theorem}
\newtheorem{lemma}{Lemma}
\title{\LARGE \bf
A Dynamic Program for a Team of Two Agents with Nested Information}
\author{Aditya Dave, {\itshape{Student Member, IEEE,}} and Andreas A. Malikopoulos, {\itshape{Senior Member, IEEE}} 
	\thanks{This research was supported by the Sociotechnical Systems Center (SSC) at the University of Delaware.} %
	\thanks{The authors are with the Department of Mechanical Engineering, University of Delaware, Newark, DE 19716 USA (email: \texttt{adidave@udel.edu; andreas@udel.edu).}} }
\begin{document}

\maketitle
\thispagestyle{empty}

\begin{abstract}
In this paper, we investigate a sequential dynamic team problem consisting of two agents with a nested information structure. We use a combination of the person-by-person and prescription approach to derive structural results for optimal control strategies for the team. We then use these structural results to present a dynamic programming (DP) decomposition to derive the optimal control strategies for a finite time horizon. We show that our DP utilizes the nested information structure to simplify the computation of the optimal control laws for the team at the final time step.
\end{abstract}

\section{Introduction}

Team theory refers to problems where a team of agents seeks to cooperatively control a state and minimize a shared cost \cite{radner1962team}, with applications including 
connected and automated vehicles \cite{malikopoulos2021optimal}, social media platforms \cite{Dave2020SocialMedia}, and robot swarms \cite{Beaver2020AnFlockingb}.
A key aspect of these problems is the team's \textit{information structure}, which describes the information available to each agent at any time. Various information structures are categorized as:
(1) \textit{Classical:} Each agent receives the same information and has perfect recall \cite{varaiya_book}.
(2) \textit{Quasi-classical:} If agent $1$ can affect the information of agent $2$, the information available to agent $1$ is \textit{also} available to agent $2$ \cite{2, lessard2013structural, nayyar2015structural, yuksel2009stochastic, Aditya2019}.
(3) \textit{Non-classical:} All other decentralized information structures are called non-classical \cite{17, 19, dave2020structural, malikopoulos2021team}.

In this paper, we analyze a dynamic team of two agents with a \textit{nested information structure}. In the nested information structure, agent $2$ shares her information with agent $1$ at each instance of time, but does not receive any information from agent $1$. Both agents collectively control and partially observe a shared state. Thus, this is a non-classical information structure. The nested information structure is commonly found in applications with real time communication problems \cite{nayyar2011structure}, vehicle platoons \cite{mahbub2021_platoonMixed}, and hierarchical control problems \cite{mahajan2015algorithmic}. 
A team of two agents with a quasi-classical, partially nested information structure, is also special case of the nested team when agent $2$ can affect the state of agent $1$, for example \cite{nayyar2015structural}. The two agent team is of interest because most insights into the structure of optimal control strategies in a team of two agents can be extended teams of many agents with a similar information structure.
Such partially nested teams are well understood in the literature for linear dynamics, quadratic costs, and Gaussian noise (LQG) \cite{2,lessard2013structural, nayyar2015structural} and for nonlinear dynamics with complete state observation \cite{mahajan2015algorithmic}. Several dynamic program (DP) decompositions have been reported in the literature for decentralized teams \cite{nayyar2019common, yuksel2020universal}, which are reviewed in detail in \cite{mahajan2012, malikopoulos2021team}. Furthermore, a DP that can be applied in team problems with nested information structures is the one proposed in \textit{the common information approach} \cite{17, nayyar2019common}, which introduces a coordinator who selects prescription functions for each agent. 




Our main contribution in this paper is to present structural results for optimal control strategies in the nested information structure which cannot be derived solely using the common information approach. Our analysis uses a combination of \textit{the person-by-person approach} \cite{2, lessard2013structural, nayyar2015structural, charalambous2016decentralized} and \textit{the prescription approach} \cite{dave2020structural}. Similar techniques have been used in conjunction with linear dynamics \cite{gagrani2017decentralized, nayyar2015structural}, where the dynamics ensure that the optimal control strategies depend only on the expected value of certain random variables. This ensures the tractability of the eventual DP. For nonlinear dynamics, similar techniques have been used for real time communication \cite{nayyar2011structure} and control sharing information structures \cite{mahajan2013controlsharing} by assuming specific dynamics. However, when agents imperfectly observe their states, the optimal control strategies derived in these papers are functions of non-parametric probability distribution with a continuous support. Thus, it is challenging to actually implement these optimal strategies. In contrast, we derive optimal control strategies without assuming specific dynamics and our strategies only require tracking probability distributions with an atomic support. However, our structural results yield strategies whose domain grows in size with time and thus, can only be applied to finite time horizons. Despite this, we believe that our results may be useful in the search for approximately optimal control strategies \cite{subramanian2019approximate} that may be time invariant. We also present a DP based on our structural results. Our DP deviates from other DPs in the literature for the final time step, where we utilize the the nested information structure to improve the computational efficiency of optimal control strategies.


The remainder of the paper proceeds as follows. In Section II, we provide the problem formulation. In Section III, we analyze a team of two agents using the person-by-person and prescription approaches, and derive structural results for optimal control strategies. In Section IV, we present a DP to derive the optimal control strategies. Finally, in Section V, we present concluding remarks and discuss ongoing work.

\section{Problem Formulation}

We consider a team of two agents who select actions over $T \in \mathbb{N}$ discrete time steps.
At any time $t=0,\dots, T$, the state of the team is denoted by the random variable $X_t$
that takes values in a finite set of feasible states $\mathcal{X}_t$.
The control action of agent $k \in \{1,2\}$ at each time $t$ is denoted by the random variable $U_t^k$ that takes values in a finite set of feasible actions $\mathcal{U}_t^k$.
Let ${U}_t^{1:2}=(U_t^1,U_t^2)$. 
Starting with the initial state $X_0$ at $t=0$, the system evolves as
\begin{equation}
    X_{t+1}=f_t\left(X_t,U_t^{1:2},W_t\right), \quad t =0,\dots,T-1, \label{st_eq}
\end{equation}
where the random variable $W_t$ denotes an uncontrolled disturbance to the state at time $t$ and takes values in a finite set of feasible disturbances $\mathcal{W}_t$.
At each time $t = 0,\dots,T$, each agent $k \in \{1,2\}$ partially observes the state as a random variable $Y_t^k$ that takes values in a finite set $\mathcal{Y}^k_t$. The observation $Y_t^k$ is given by
\begin{equation}
    Y_t^k=h_t^k(X_t,V_t^k), \quad t=0,\dots,T, \label{ob_eq}
\end{equation}
where the random variable $V_t^k$ denotes the measurement noise that takes values in a finite set of feasible noises $\mathcal{V}^k_t$.
The external disturbances $\{W_t: t=0,\dots,T\}$, noises in measurement $\{V_t^1, V_t^2: t=0,\dots,T\}$, and initial state $X_0$ are collectively called the \textit{primitive random variables} of the team and their probability distributions are known a priori.
We assume that each primitive random variable is independent of all other primitive random variables. This ensures that the state $X_t$ evolves as a controlled Markov chain at each $t=0,\dots,T$ \cite{varaiya_book}.
Next, we define the \textit{memory} of each agent $k \in \{1,2\}$ at each time $t$, which is a collection of all the data received by them. The memories of both agents determine the team's \textit{information structure}.

\begin{definition}
The memory of agent $k \in \{1,2\}$ at each time $t = 0,\dots,T$ is a set of random variables $M_t^k$ 
that takes values in a finite collection of sets $\mathcal{M}^k_t$. 
\end{definition}

Let $U_{0:t}^k=(U_0^k,\dots,U_t^k)$, $k \in \{1,2\}$.
The memories of the two agents in the team at any time $t = 0,\dots,T$ are given by
\begin{align}
    M_t^2 &:= \{Y_{0:t}^2, U_{0:t-1}^2\}, \label{m2_def} \\
    M_t^1 &:= \{Y_{0:t}^1, U_{0:t-1}^1, Y_{0:t}^2, U_{0:t-1}^2\}. \label{m1_def}
\end{align}
In \eqref{m2_def}-\eqref{m1_def}, we consider that each agent updates her memory before generating her action at each time $t$.
Note that the memories satisfy the following properties for all $t=0,\dots,T$:
(1) \textit{causality:}
    $M_t^k \subseteq \{Y_{0:t}^{1:2},U_{0:t-1}^{1:2}\}$, $k \in \{1,2\}$;
(2) \textit{perfect recall:} 
$M_t^k \subseteq M_{t+1}^k$, $k \in \{1,2\}$; and
(3) \textit{nested information structure:}
$M_t^2 \subseteq M_t^1$.

\begin{remark}
Causality and perfect recall are very general properties used to derive DPs for both centralized \cite{varaiya_book} and decentralized \cite{mahajan2012} teams. The nested information structure is unique to our team.
\end{remark}

The new information of any agent $k$ at each $t$ is the set
    $Z_t^k := M_t^k \setminus M_{t-1}^k$
that takes values in a finite collection of sets $\mathcal{Z}_t^k$. 
Thus,
\begin{align}
    Z_t^1 &= \{Y_t^{1}, U_{t-1}^{1}, Y_{t}^{2}, U_{t-1}^{2}\}, \quad t = 0,\dots,T, \label{z_1} \\
    Z_t^2 &= \{Y_{t}^{2}, U_{t-1}^{2}\}, \quad \quad \quad \quad \quad \; t = 0,\dots,T, \label{z_2}
\end{align}
which implies that
\begin{gather}
    Z_t^2 \subset Z_t^1, \quad  t=0,\dots,T. \label{z_relation}
\end{gather}

Each agent $k \in \{1,2\}$ at each $t = 0,\dots,T$ selects an action $U_t^k$ as a function of her memory $M_t^k$. Thus,
\begin{gather} \label{U_def}
    U_t^k = g_t^k(M_t^k), \quad t=0,\dots,T,
\end{gather}
where $g_t^k$ is the \textit{control law} of agent $k$ at time $t$. The \textit{control strategy} of each agent $k$ is $\boldsymbol{g}^k := (g_{0}^k,\dots,g_T^k)$ and the \textit{strategy profile} of the team is $\boldsymbol{g} := (\boldsymbol{g}^1,\boldsymbol{g}^2)$. The set of all feasible strategy profiles is $\mathcal{G}$. 
After each $k \in \{1,2\}$ selects action $U_t^k$ at time $t$, the team incurs a cost $c_t(X_t,U_t^1,U_t^2) \in \mathbb{R}_{\geq0}$. Then, the performance criterion for the system is 
\begin{equation}
    \mathcal{J}(\boldsymbol{g}) = \mathbb{E}^{\boldsymbol{g}}\left[\sum_{t=0}^T{c_t\big(X_t,U_t^{1},U_t^2\big)}\right], \label{per_cri}
\end{equation}
where the expectation is with respect to the joint distribution on all random variables, and $U_t^k$ is given by \eqref{U_def} for each agent $k \in \{1,2\}$, at each time $t = 0,\dots,T$. Then, we can state the optimization problem for the team as follows.

\begin{problem} \label{problem_1}
The optimization problem is $\inf_{\boldsymbol{g} \in \mathcal{G}} \mathcal{J}(\boldsymbol{g}),$
given the probability distributions of the primitive random variables $\{X_0,W_{t},V_{t}^1,V_{t}^2:t = 0,\dots,T\}$, and the functions $\left\{c_t,f_t,h_t^{1:2}:t=0,\dots,T\right\}$.
\end{problem}

Our aim is to develop a DP that can tractably derive an optimal strategy profile $\boldsymbol{g}^* \in \mathcal{G}$ for Problem \ref{problem_1}, such that $\mathcal{J}(\boldsymbol{g}^*) \leq \mathcal{J}(\boldsymbol{g})$, for all $\boldsymbol{g} \in \mathcal{G}$.

\section{Analysis}

\subsection{The Person-by-Person Approach}

In this subsection, we present a structural result for the optimal control strategy of agent $1$ using the person-by-person approach. This will help us derive our DP in Section IV. We first fix a control strategy $\boldsymbol{g}^2$ for agent $2$, such that
\begin{gather}
    U_t^2 = g_t^2(M_t^2), \quad t = 0,\dots,T. \label{U_2_fixed}
\end{gather}
In this approach, given the strategy $\boldsymbol{g}^2$ of agent $2$, we set up a centralized problem from the perspective of agent $1$. Since $M_t^2 \subseteq M_t^1$ at each time $t = 0,\dots,T$, given the control strategy $\boldsymbol{g}^2$, agent $1$ can derive the action $U_t^2$ using \eqref{U_2_fixed}.
Then, we can define a new state for agent $1$ as
\begin{gather} \label{eq_st_1}
    S_t^1 := \{X_t, M_t^2\}, \quad  t = 0,\dots,T,
\end{gather}
that takes values in a finite collection of sets $\mathcal{S}_t^1$ 
at any time $t$. Next, we show that 
the new state 
is sufficient for input-output mapping. 

\begin{lemma} \label{lem_pbp_suff}
Let $\boldsymbol{g}^2$ be a given control strategy of agent $2$. At each time $t=0,\dots,T$, the state $S_t^1 \in \mathcal{S}_t^1$ is sufficient for input-output mapping by the following properties \cite{witsenhausen1976some}:

1) There exist functions $\hat{f}^{{1}}_t(\cdot)$ and $\hat{h}^{{1}}_t(\cdot)$  for all $t=0,\dots,T-1$,, such that
  \begin{align}
	S^{{1}}_{t+1} &= \hat{f}^{{1}}_{t}(S^{{1}}_t, U_t^{{1}}, W_t, V_{t+1}^{1:2}), \label{eq_St_pbp_1} \\
	Z^{{1}}_{t+1} &= \hat{h}^{{1}}_{t}(S^{{1}}_{t},U_t^{{1}},W_t,V_{t+1}^{1:2}). \label{eq_St_pbp_2}
  \end{align}
  
2) There exist functions $\hat{c}^{{1}}_t(\cdot)$, such that
    \begin{align}
	c_t(X_t,U_t^{1},U_t^{2}) &= \hat{c}^{{1}}_t(S^{{1}}_t,U^{{1}}_t), \; \; \;  t = 0,\dots,T. \label{eq_St_pbp_3}
  \end{align}
\end{lemma}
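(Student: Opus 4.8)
The plan is to prove all three properties by explicit construction of the maps $\hat{f}^1_t$, $\hat{h}^1_t$, and $\hat{c}^1_t$, exploiting the single observation that makes the person-by-person reduction work: once $\boldsymbol{g}^2$ is fixed, agent $2$'s action is reconstructible by agent $1$. Concretely, since $M_t^2$ is a component of the augmented state $S_t^1 = \{X_t, M_t^2\}$ by \eqref{eq_st_1}, and since $U_t^2 = g_t^2(M_t^2)$ by \eqref{U_2_fixed}, the action $U_t^2$ is a deterministic function of $S_t^1$ alone; write $U_t^2 = \tilde{g}_t^2(S_t^1)$. I would establish this first, because every subsequent step reduces to substituting $\tilde{g}_t^2(S_t^1)$ wherever $U_t^2$ appears. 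Note that this is a pointwise construction at each $t$, requiring no induction.

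Granting this, property 2) is immediate: in $c_t(X_t, U_t^1, U_t^2)$ the argument $X_t$ is the first component of $S_t^1$ and $U_t^2 = \tilde{g}_t^2(S_t^1)$, so defining $\hat{c}^1_t(S_t^1, U_t^1) := c_t(X_t, U_t^1, \tilde{g}_t^2(S_t^1))$ verifies \eqref{eq_St_pbp_3}. For the state recursion \eqref{eq_St_pbp_1}, I would treat the two components of $S_{t+1}^1 = \{X_{t+1}, M_{t+1}^2\}$ separately. For the physical state, substituting $U_t^2 = \tilde{g}_t^2(S_t^1)$ into \eqref{st_eq} expresses $X_{t+1} = f_t(X_t, U_t^1, \tilde{g}_t^2(S_t^1), W_t)$ as a function of $(S_t^1, U_t^1, W_t)$. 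For agent $2$'s memory, I would use the incremental update $M_{t+1}^2 = M_t^2 \cup \{Y_{t+1}^2, U_t^2\}$ implied by \eqref{m2_def} and \eqref{z_2}: here $M_t^2$ is a component of $S_t^1$, $U_t^2 = \tilde{g}_t^2(S_t^1)$, and $Y_{t+1}^2 = h_{t+1}^2(X_{t+1}, V_{t+1}^2)$ with $X_{t+1}$ already expressed above, so $M_{t+1}^2$ is a function of $(S_t^1, U_t^1, W_t, V_{t+1}^2)$. Composing these yields the required $\hat{f}^1_t(S_t^1, U_t^1, W_t, V_{t+1}^{1:2})$, which simply ignores the argument $V_{t+1}^1$.

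The observation recursion \eqref{eq_St_pbp_2} follows along the same lines. Writing $Z_{t+1}^1 = \{Y_{t+1}^1, U_t^1, Y_{t+1}^2, U_t^2\}$ from \eqref{z_1}, the component $U_t^1$ is an input, $U_t^2 = \tilde{g}_t^2(S_t^1)$, and both observations expand via \eqref{ob_eq} as $Y_{t+1}^k = h_{t+1}^k(X_{t+1}, V_{t+1}^k)$ for $k \in \{1,2\}$, where $X_{t+1}$ is the function of $(S_t^1, U_t^1, W_t)$ constructed above. Collecting these gives $\hat{h}^1_t(S_t^1, U_t^1, W_t, V_{t+1}^{1:2})$, now genuinely using both measurement noises.

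I expect no deep obstacle: the content is a verification by direct substitution rather than an estimate. The one point requiring care is the ordering of the dependency chain, namely that $U_t^2$ must be resolved from $S_t^1$ before $X_{t+1}$, and $X_{t+1}$ before $Y_{t+1}^{1:2}$, and confirming that this chain is acyclic so the composed maps are well defined. The conceptual crux, which I would state plainly, is that it is precisely the nested structure $M_t^2 \subseteq M_t^1$ (encoded by placing $M_t^2$ inside $S_t^1$) that licenses the substitution $U_t^2 = \tilde{g}_t^2(S_t^1)$; without it, agent $1$ could not reconstruct $U_t^2$ and $S_t^1$ would fail to be sufficient for input-output mapping.
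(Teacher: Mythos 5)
Your proof is correct and takes essentially the same route as the paper's (much terser) argument: substitute the fixed law $U_t^2 = g_t^2(M_t^2)$, the dynamics \eqref{st_eq}, and the observation model \eqref{ob_eq} into the definitions of $S_{t+1}^1$, $Z_{t+1}^1$, and the cost, then read off the constructed maps. Your version merely makes explicit the dependency chain and the componentwise bookkeeping that the paper leaves to the reader.
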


\begin{proof}
To prove these results, we expand the LHS in each of \eqref{eq_St_pbp_1}-\eqref{eq_St_pbp_3} by substituting appropriate relations from the system dynamics \eqref{st_eq}, \eqref{ob_eq}, the definitions \eqref{z_1}, \eqref{eq_st_1}, and \eqref{U_2_fixed}, for each $t=0,\dots,T$. Thus, we can rewrite the LHS in terms of the variables in the RHS and construct appropriate functions $\hat{f}_t^1(\cdot)$, $\hat{h}_t^1(\cdot)$, and $\hat{c}_t^1(\cdot)$ for each time $t$.
\end{proof}

Given the strategy $\boldsymbol{g}^2$, Lemma \ref{lem_pbp_suff} leads to a centralized problem for agent $1$, with state $S_t^1$, control action $U_t^1$, observation $Z_{t+1}^1$, and cost $\hat{c}_t^1(S_t^1,U_t^1)$ at each time $t = 0,\dots,T$. The performance criterion is solely a function of the control strategy $\boldsymbol{g}^1$, as 
    $\mathcal{J}^1(\boldsymbol{g}^1) = \mathbb{E}^{\boldsymbol{g}} \left[\sum_{t=0}^T \hat{c}^1_t(S_t^1, U_t^1)\right]$,
where the expectation is with respect to the joint probability distribution on all random variables and $\boldsymbol{g}=(\boldsymbol{g}^1,\boldsymbol{g}^2)$.

\begin{problem} \label{problem_2}
The problem for agent $1$ is
    $\inf_{\boldsymbol{g}^1} \mathcal{J}^1(\boldsymbol{g}^1)$,
given the control strategy $\boldsymbol{g}^2$, the probability distributions of the primitive random variables $\{X_0,W_{t},V_{t}^1,V_{t}^2:t = 0,\dots,T\}$, and the functions $\left\{c_t,f_t,h_t^{1:2}:t=0,\dots,T\right\}$.
\end{problem}

In Problem \ref{problem_2}, at each time $t$, the component $M_t^2$ of the state $S_t^1$ is observed by agent $1$. However, the component $X_t$ of state $S_t^1$ must be inferred by agent $1$ using her memory $M_t^1$. For such a problem, it is known \cite[page 79]{varaiya_book} that agent $1$ can estimate $X_t$ using the probability distribution 
\begin{gather} \label{pi_1_def}
    \Pi_t^1 := \mathbb{P}^{\boldsymbol{g}}\big(X_t ~|~ M_t^1\big), \quad t=0,\dots,T,
\end{gather}
that takes values in the set of feasible distributions $\mathcal{P}^1_t := \Delta(\mathcal{X}_t)$ at each time $t$. The distribution $\Pi_t^1$ is called an \textit{information state} for agent $1$ and yields the following structural result for the control strategy of agent $1$ in Problem \ref{problem_2}.

\begin{theorem} \label{pbp_struct_result}
    Let $\boldsymbol{g}^2$ be a given control strategy for agent $2$. Then, the optimal control strategy $\boldsymbol{g}^{*1}$ of agent $1$ in Problem \ref{problem_2} has the structural form
    \begin{gather} \label{eq_pbp_struct_result}
        U_t^1 = g_t^{*1}(M_t^2, \Pi_t^1), \quad t = 0,\dots,T.
    \end{gather}
\end{theorem}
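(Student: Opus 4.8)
The plan is to recognize Problem \ref{problem_2} as a \emph{centralized} partially observed Markov decision problem and to show that the pair $(M_t^2, \Pi_t^1)$ is a sufficient information state for it, after which the structural form \eqref{eq_pbp_struct_result} follows from the standard dynamic programming argument cited at \cite[page 79]{varaiya_book}. By Lemma \ref{lem_pbp_suff}, agent $1$ faces a problem with fully accessible state component $M_t^2$ (since $M_t^2 \subseteq M_t^1$) and hidden state component $X_t$, controlled action $U_t^1$, observation $Z_{t+1}^1$, and cost $\hat{c}_t^1(S_t^1,U_t^1)$. The natural belief state is the conditional law $\mathbb{P}^{\boldsymbol{g}}(S_t^1 \mid M_t^1)$; because $M_t^2$ is a deterministic function of the conditioning memory $M_t^1$, this law factors as a point mass on the realized value of $M_t^2$ together with $\Pi_t^1 = \mathbb{P}^{\boldsymbol{g}}(X_t \mid M_t^1)$, so it is equivalent to the pair $(M_t^2, \Pi_t^1)$.

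First I would verify the two defining properties of an information state. For the \emph{cost sufficiency} property, conditioning on $M_t^1$ and using \eqref{eq_St_pbp_3} gives $\mathbb{E}^{\boldsymbol{g}}[\hat{c}_t^1(S_t^1,U_t^1)\mid M_t^1] = \sum_{x \in \mathcal{X}_t} \hat{c}_t^1\big((x,M_t^2),U_t^1\big)\,\Pi_t^1(x)$, which depends on the past only through $(M_t^2, \Pi_t^1, U_t^1)$. For the \emph{controlled Markov evolution} property, I would exhibit a fixed filtering map $\eta_t$ with $\Pi_{t+1}^1 = \eta_t(\Pi_t^1, M_t^2, U_t^1, Z_{t+1}^1)$, while the observed component evolves trivially as $M_{t+1}^2 = M_t^2 \cup Z_{t+1}^2$. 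Here I would invoke the controlled-Markov property of $X_t$ guaranteed by the independence of the primitive random variables, together with the observation that $U_t^2 = g_t^2(M_t^2)$ is itself a known function of $M_t^2$; Bayes' rule applied to $X_{t+1}$ given $(M_t^1, Z_{t+1}^1)$ then expresses $\Pi_{t+1}^1$ in terms of $\Pi_t^1$, the transition kernel of \eqref{st_eq}, the observation kernels of \eqref{ob_eq}, and the realized $(U_t^1, U_t^2, Y_{t+1}^{1:2})$, each of which is a function of $(\Pi_t^1, M_t^2, U_t^1, Z_{t+1}^1)$.

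With both properties in hand, I would close the argument by backward induction on a value function $V_t(M_t^2, \Pi_t^1)$. At $t=T$ the optimal action minimizes the conditional cost $\sum_x \hat{c}_T^1((x,M_T^2),u)\Pi_T^1(x)$ over $u \in \mathcal{U}_T^1$, which depends only on $(M_T^2, \Pi_T^1)$; inductively, the Bellman recursion
\[
V_t(M_t^2,\Pi_t^1) = \min_{u \in \mathcal{U}_t^1} \Big\{ \sum_{x} \hat{c}_t^1\big((x,M_t^2),u\big)\Pi_t^1(x) + \mathbb{E}\big[V_{t+1}(M_{t+1}^2, \Pi_{t+1}^1)\,\big|\, M_t^2,\Pi_t^1,u\big]\Big\}
\]
is well defined because both the per-step cost and the evolution of $(M_{t+1}^2,\Pi_{t+1}^1)$ depend on the past only through $(M_t^2,\Pi_t^1,u)$ and the random $Z_{t+1}^1$. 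Hence an optimal law depending only on $(M_t^2, \Pi_t^1)$ exists at each $t$, giving \eqref{eq_pbp_struct_result}.

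The hard part will be establishing the filtering recursion cleanly, in particular arguing that the Bayesian update of $\Pi_t^1$ requires no information about the past beyond $(M_t^2, \Pi_t^1)$. This rests on a careful conditional-independence computation using the primitive-independence assumption, so that $Y_{t+1}^{1:2}$ depends on the history only through $X_{t+1}$, and on handling the measure-zero conditioning events where an observation has zero likelihood, which I would resolve by fixing an arbitrary update on such null events without affecting the expected cost.
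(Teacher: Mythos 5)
Your proposal is correct and takes essentially the same route as the paper: the paper's own proof of this theorem is a one-line appeal to the standard information-state argument for partially observed Markov decision processes in \cite[page 79]{varaiya_book}, and your write-up simply supplies the details of that argument (factorization of the belief on $S_t^1$ into $(M_t^2,\Pi_t^1)$, cost sufficiency, the filtering recursion, and backward induction). The filtering recursion you identify as the delicate step is exactly what the paper later records as Lemma \ref{pi_1_1}, so your plan is consistent with the paper's development.
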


\begin{proof}
This result follows from standard arguments for partially observed Markov decision processes \cite[page 79]{varaiya_book}.
\end{proof}

Note that every optimal strategy profile $\boldsymbol{g}^* = (\boldsymbol{g}^{*1}, \boldsymbol{g}^{*2})$ for Problem \ref{problem_1}, must be a solution of Problem \ref{problem_2} by fixing $\boldsymbol{g}^{*2}$ for agent $2$ and selecting the control strategy of agent $1$ as $\boldsymbol{g}^{*1}$ \cite{2}. Thus, every optimal profile $\boldsymbol{g}^*$ for Problem \ref{problem_1} also satisfies Theorem \ref{pbp_struct_result}. Then, in Problem \ref{problem_1}, we can restrict our attention to strategy profiles $\boldsymbol{g} \in \mathcal{G}$ with the structural form
\begin{align}
    U_t^1 &= g_t^1(M_t^2, \Pi_t^1), \label{u_pbp} \\
    U_t^2 &= g_t^2(M_t^2), \label{u_2}
\end{align}
at each time $t=0,\dots,T$. To this end, we denote the set of feasible strategy profiles consistent with \eqref{u_pbp}-\eqref{u_2} by $\mathcal{G}'$. 

\subsection{The Prescription Approach}

In this subsection, we consider Problem 1 with the restriction $\boldsymbol{g} \in \mathcal{G}'$.
Any strategy profile $\boldsymbol{g} \in \mathcal{G}'$ for the team is accessible to both agents. However, at any time $t=0,\dots,T$, agent $2$ cannot generate the action $U_t^1$ using \eqref{u_pbp}, because she can only access the memory $M_t^2$ and not the information state $\Pi_t^1 \in \mathcal{P}_t^1$, which is a function of the memory $M_t^1$. Instead, agent $2$ considers that the action $U_t^1$ is generated in two stages at each time $t$: (1) agent $1$ generates a function using only $M_t^2$, and (2) this function takes as an input the information state $\Pi_t^1$ to generate the action $U_t^1$. We call this function a \textit{prescription} of agent $2$ for agent $1$ at time $t$. 

\begin{definition}
A \textit{prescription} of agent $2$ for agent $1$ at any time $t=0,\dots,T$ is a function $\Gamma_t^{[2,1]}: \mathcal{P}^{1}_t \to \mathcal{U}_t^1$ that takes values in a finite set of feasible functions $\mathcal{F}^{[1,2]}_t$. 
\end{definition}

The prescription $\Gamma_t^{[2,1]}$ is generated as
\begin{gather} \label{gamma}
    \Gamma_t^{[2,1]} = \psi_t^{[2,1]}(M_t^2), \quad t=0,\dots,T,
\end{gather}
where $\psi_t^{[2,1]}: \mathcal{M}_t^2 \to \mathcal{F}_t^{[2,1]}$ is called the prescription law of agent $2$ for agent $1$ at time $t$. We call $\boldsymbol{\psi}^2 := \big(\psi_t^{[2,1]} : t=0,\dots,T\big)$ the prescription strategy of agent $2$, and denote the set of feasible prescription strategies by $\Psi^2$. Next, Lemmas \ref{lem_psi_g_relation} and \ref{lem_psi_g_relation_inv} show that any control action $U_t^1$ can be equivalently generated using either a control strategy $\boldsymbol{g} \in \mathcal{G}'$ or an appropriate prescription strategy $\boldsymbol{\psi}^2$.

\begin{lemma} \label{lem_psi_g_relation}
For any given control strategy $\boldsymbol{g} \in \mathcal{G}'$, we can construct a prescription strategy $\boldsymbol{\psi}^2 \in \Psi^2$ such that 
\begin{gather}
    \Gamma_t^{[2,1]}\left(\Pi_t^1\right) = g_t^1(M_t^2,\Pi_t^1) = U_t^1, \quad t =0,\dots,T. \label{u_presc}
\end{gather}
\end{lemma}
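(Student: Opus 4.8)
The plan is to construct the prescription strategy $\boldsymbol{\psi}^2$ directly from the control strategy $\boldsymbol{g}^1$ by \emph{currying}, i.e., by partially evaluating the control law $g_t^1$ on its first argument at each time $t$. Recall that for $\boldsymbol{g} \in \mathcal{G}'$ the law $g_t^1$ maps $\mathcal{M}_t^2 \times \mathcal{P}_t^1$ into $\mathcal{U}_t^1$, a function of the realized memory of agent $2$ and the information state of agent $1$. The key observation I would exploit is that fixing the first argument to any realization $m_t^2 \in \mathcal{M}_t^2$ leaves a single-argument function of the information state, which is precisely the type required of a prescription $\Gamma_t^{[2,1]} : \mathcal{P}_t^1 \to \mathcal{U}_t^1$.

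First I would, for each $t = 0,\dots,T$ and each realization $m_t^2 \in \mathcal{M}_t^2$, introduce the map $\gamma_t^{m_t^2} : \mathcal{P}_t^1 \to \mathcal{U}_t^1$ defined by $\gamma_t^{m_t^2}(\pi) := g_t^1(m_t^2, \pi)$ for all $\pi \in \mathcal{P}_t^1$. Since $\mathcal{P}_t^1$ and $\mathcal{U}_t^1$ are finite and $\mathcal{F}_t^{[2,1]}$ is taken to be the set of all functions from $\mathcal{P}_t^1$ into $\mathcal{U}_t^1$, each such $\gamma_t^{m_t^2}$ is a feasible prescription, that is, $\gamma_t^{m_t^2} \in \mathcal{F}_t^{[2,1]}$. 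I would then define the prescription law $\psi_t^{[2,1]} : \mathcal{M}_t^2 \to \mathcal{F}_t^{[2,1]}$ by $\psi_t^{[2,1]}(m_t^2) := \gamma_t^{m_t^2}$ and set $\boldsymbol{\psi}^2 := \big(\psi_t^{[2,1]} : t = 0,\dots,T\big)$, which lies in $\Psi^2$ by construction.

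To verify \eqref{u_presc}, I would substitute the realized memory $M_t^2$ into the prescription law via \eqref{gamma}, obtaining the realized prescription $\Gamma_t^{[2,1]} = \psi_t^{[2,1]}(M_t^2) = \gamma_t^{M_t^2}$. Evaluating this prescription at the realized information state $\Pi_t^1$ then gives $\Gamma_t^{[2,1]}(\Pi_t^1) = \gamma_t^{M_t^2}(\Pi_t^1) = g_t^1(M_t^2, \Pi_t^1)$, which equals $U_t^1$ by the structural form \eqref{u_pbp} of the strategies in $\mathcal{G}'$. This closes the chain of equalities in \eqref{u_presc} for every $t = 0,\dots,T$.

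I do not expect a serious obstacle here, as the argument is essentially currying made rigorous by the finiteness of all underlying sets. The only point requiring care is confirming that each partially evaluated map $\gamma_t^{m_t^2}$ lands in the feasible set $\mathcal{F}_t^{[2,1]}$; this is immediate once $\mathcal{F}_t^{[2,1]}$ is understood to contain all functions $\mathcal{P}_t^1 \to \mathcal{U}_t^1$, and it is the hypothesis I would flag explicitly. Beyond this, one should note that the construction is purely pointwise in the realizations of $M_t^2$, so no consistency conditions across times or across realizations need to be checked.
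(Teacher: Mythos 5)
Your proposal is correct and is essentially the paper's own argument: the paper also defines $\psi_t^{[2,1]}(M_t^2)(\Pi_t^1) := g_t^1(M_t^2,\Pi_t^1)$, i.e., the same currying construction, just stated more tersely. Your added care about feasibility of each partially evaluated map in $\mathcal{F}_t^{[2,1]}$ is a reasonable elaboration but not a departure in approach.
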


\begin{proof}
For any control law ${g}_t^1$ that generates $U_t^1$ at time $t$ using \eqref{u_pbp}, we can construct a prescription law $\psi_t^{{[2,1]}} : \mathcal{M}_t^2 \to \mathscr{F}_t^{[2,1]}$ as 
$\Gamma_t^{[2,1]}(\Pi^1_t) = \psi_t^{[2,1]}(M_t^2)(\Pi_t^1) := g_t^1(M_t^2,\Pi_t^1) = U_t^1$, for all $t = 0,\dots,T.$
\end{proof}

\begin{lemma}
\label{lem_psi_g_relation_inv}
For any given prescription strategy $\boldsymbol{\psi}^{{2}} \in \Psi^{{2}}$, we can construct a control strategy $\boldsymbol{g} \in \mathcal{G}'$ such that
\begin{gather}
     g_t^{{1}}(M_t^2, \Pi^1_t) = \Gamma_t^{[2,1]}(\Pi_t^1) = U_t^{{1}}, \quad t =0,\dots,T. \label{u_presc_inv}
\end{gather}
\end{lemma}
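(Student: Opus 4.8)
The plan is to mirror the construction used in the proof of Lemma \ref{lem_psi_g_relation}, but reverse its direction. The essential observation is that a prescription law $\psi_t^{[2,1]}: \mathcal{M}_t^2 \to \mathcal{F}_t^{[2,1]}$ and a control law $g_t^1: \mathcal{M}_t^2 \times \mathcal{P}_t^1 \to \mathcal{U}_t^1$ are related by currying: since each prescription $\Gamma_t^{[2,1]} = \psi_t^{[2,1]}(m) \in \mathcal{F}_t^{[2,1]}$ is itself a map $\mathcal{P}_t^1 \to \mathcal{U}_t^1$, evaluating it at a distribution yields an action in $\mathcal{U}_t^1$. Thus, starting from a given prescription strategy, I would recover a control law simply by ``uncurrying'' each prescription law.

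Concretely, for each time $t=0,\dots,T$ and each pair $(m,\pi) \in \mathcal{M}_t^2 \times \mathcal{P}_t^1$, I would define the control law by
\[
    g_t^1(m,\pi) := \psi_t^{[2,1]}(m)(\pi).
\]
This is well-defined and takes values in $\mathcal{U}_t^1$ precisely because $\psi_t^{[2,1]}(m)$ is, by definition, a function in $\mathcal{F}_t^{[2,1]}$, i.e., a map from $\mathcal{P}_t^1$ into $\mathcal{U}_t^1$. Since all underlying sets are finite, no measurability concerns arise, so the resulting collection $\boldsymbol{g}^1 = (g_0^1,\dots,g_T^1)$, together with agent $2$'s own control law $\boldsymbol{g}^2$ carried over unchanged, constitutes a feasible strategy profile $\boldsymbol{g} = (\boldsymbol{g}^1, \boldsymbol{g}^2) \in \mathcal{G}'$ of the form \eqref{u_pbp}-\eqref{u_2}.

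It then remains to verify the claimed action equality. Substituting the realized memory $M_t^2$ and information state $\Pi_t^1$ into the definition gives $g_t^1(M_t^2,\Pi_t^1) = \psi_t^{[2,1]}(M_t^2)(\Pi_t^1)$, and since $\Gamma_t^{[2,1]} = \psi_t^{[2,1]}(M_t^2)$ by \eqref{gamma}, the right-hand side equals $\Gamma_t^{[2,1]}(\Pi_t^1) = U_t^1$. This yields \eqref{u_presc_inv} for every $t = 0,\dots,T$.

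I do not expect a genuine obstacle here, as this is simply the inverse of the correspondence established in Lemma \ref{lem_psi_g_relation}. The one point I would be careful about is that the identity must hold for \emph{every} value $\pi$ that the information state $\Pi_t^1$ might take, since $\Pi_t^1 = \mathbb{P}^{\boldsymbol{g}}(X_t \mid M_t^1)$ depends on the strategy itself. Defining $g_t^1$ on the full domain $\mathcal{M}_t^2 \times \mathcal{P}_t^1$ sidesteps this apparent circularity, because the pointwise equality holds regardless of which distribution is realized.
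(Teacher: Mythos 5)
Your construction $g_t^1(m,\pi) := \psi_t^{[2,1]}(m)(\pi)$ is exactly the paper's own proof, which defines $g_t^1(M_t^2,\Pi_t^1) := \psi_t^{[2,1]}(M_t^2)(\Pi_t^1) = \Gamma_t^{[2,1]}(\Pi_t^1) = U_t^1$ for all $t$. The proposal is correct and takes essentially the same (uncurrying) approach; the extra remarks on finiteness and pointwise definition over the full domain are harmless elaborations.
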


\begin{proof}
For any prescription strategy $\boldsymbol{\psi}^{{2}},$ we construct a control strategy $\boldsymbol{g}$ such that
$g_t^{{1}}(M_t^2, \Pi^1_t) := {\psi}_t^{[2,1]}(M_t^2)(\Pi^1_t) = \Gamma_t^{[2,1]}(\Pi_t^1) = U_t^1,$ for all $t=0,\dots,T$.
\end{proof}

Lemmas \ref{lem_psi_g_relation} and \ref{lem_psi_g_relation_inv} imply that every control action $U_t^{{1}}$ of an agent ${{1}}$ generated through a control strategy $\boldsymbol{g}^1$ can also be generated through an appropriate prescription strategy $\boldsymbol{\psi}^{{2}}$ and vice versa. We always choose control and prescription strategies that satisfy \eqref{u_presc} and \eqref{u_presc_inv}. Thus, at each time $t$,
\begin{align}
    \Pi_t^1 &= \mathbb{P}^{\boldsymbol{g}} (X_t ~|~ M_t^1) = \mathbb{P}^{(\boldsymbol{g}, \boldsymbol{\psi}^2)} (X_t ~|~ M_t^1, \Gamma_{0:t-1}^{[2,1]}), \label{pi_1_with_gamma}
\end{align}
where we use Lemma \ref{lem_psi_g_relation} to construct $\boldsymbol{\psi}^2$ given $\boldsymbol{g}$, and we can add the history of prescriptions $\Gamma_{0:t-1}^{[2,1]}$ to the conditioning because they are simply functions of $M_t^2 \subseteq M_t^1$ and $\boldsymbol{\psi}^2$. 
Using similar arguments, it holds that
    $\mathcal{J}(\boldsymbol{g}) 
    = \mathbb{E}^{(\boldsymbol{g},\boldsymbol{\psi}^2)}\left[\sum_{t=0}^T c_t(X_t,U_t^{1:2})\right]$,
where $U_t^1$ is equivalently generated using either $g_t^1(M_t^2,\Pi_t^1)$ or $\Gamma_t^{[2,1]}(\Pi_t^1)$.

\subsection{A New State for Agent $2$}

In this subsection, we define a state sufficient for input-output mapping for agent $2$. 
We first define the information accessible to agent $1$ but \textit{inaccessible} to agent $2$ at each time $t=0,\dots,T$ as the set of random variables
\begin{gather}
    L_t^{[1,2]} := M_t^1 \setminus M_t^2,
\end{gather}
that takes values in a finite collection of sets $\mathcal{L}^{[1,2]}_t$. For all $t$, we define an information state for agent $2$ as the distribution
\begin{gather} \label{pi_2_def}
    \Pi_t^2 := \mathbb{P}^{(\boldsymbol{g}, \boldsymbol{\psi}^2)}\big(X_t, L_t^{[1,2]}~|~M_t^2, \Gamma_{0:t-1}^{[2,1]}\big),
\end{gather}
that takes values in the set of feasible distributions $\mathcal{P}^2_t := \Delta(\mathcal{X}_t \times \mathcal{L}_t^{[1,2]})$. Next, we show that we can write the information state $\Pi_t^1$ of agent $1$ in terms of $\Pi_t^2$ at each $t=0,\dots,T$.


\begin{lemma} \label{pi_rel}
At any time $t=0,\dots,T$, 
for the pair of probability distributions $\Pi_t^1$ and $\Pi_t^2$, we can construct a function ${e}_t: \mathcal{P}_t^2 \times \mathcal{L}_t^{[1,2]}  \to  \mathcal{P}_t^1$, such that
\begin{gather}
    \Pi_t^1 = {e}_t\big(\Pi_t^2, L_t^{[1,2]} \big). \label{eq_pi_rel}
\end{gather}
\end{lemma}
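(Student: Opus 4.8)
The plan is to obtain $\Pi_t^1$ from $\Pi_t^2$ by conditioning agent $2$'s joint belief on the realized value of the extra information $L_t^{[1,2]}$ that agent $1$ possesses but agent $2$ does not. First I would record the structural identity $M_t^1 = \{M_t^2, L_t^{[1,2]}\}$, which follows immediately from the definition $L_t^{[1,2]} := M_t^1 \setminus M_t^2$ together with the nested property $M_t^2 \subseteq M_t^1$. Using \eqref{pi_1_with_gamma}, I can adjoin the prescription history $\Gamma_{0:t-1}^{[2,1]}$ to the conditioning in $\Pi_t^1$ without changing its value, since these prescriptions are deterministic functions of $M_t^2 \subseteq M_t^1$ and $\boldsymbol{\psi}^2$. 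Consequently, both information states are conditioned on the common information $(M_t^2, \Gamma_{0:t-1}^{[2,1]})$, and $\Pi_t^1$ differs only in that it additionally conditions on $L_t^{[1,2]}$.

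Next I would apply the definition of conditional probability to peel off this extra conditioning. For any realization $x$ of $X_t$ and $\ell$ of $L_t^{[1,2]}$, Bayes' rule yields
\begin{gather}
    \mathbb{P}^{(\boldsymbol{g},\boldsymbol{\psi}^2)}\big(X_t = x ~|~ M_t^2, \Gamma_{0:t-1}^{[2,1]}, L_t^{[1,2]} = \ell\big) = \frac{\Pi_t^2(x, \ell)}{\sum_{x' \in \mathcal{X}_t} \Pi_t^2(x', \ell)}, \label{eq_pi_rel_bayes}
\end{gather}
where the numerator is the value that the joint distribution $\Pi_t^2$ assigns to $(x, \ell)$ and the denominator is its marginal over $X_t$ at $\ell$. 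The left side of \eqref{eq_pi_rel_bayes} is, by the argument above, exactly the mass $\Pi_t^1(x)$, so this identity expresses $\Pi_t^1$ purely as a function of the distribution $\Pi_t^2$ and the realized value $\ell = L_t^{[1,2]}$.

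Finally I would define $e_t: \mathcal{P}_t^2 \times \mathcal{L}_t^{[1,2]} \to \mathcal{P}_t^1$ to be precisely this normalization map: given $\pi^2 \in \mathcal{P}_t^2$ and $\ell \in \mathcal{L}_t^{[1,2]}$, set $e_t(\pi^2, \ell)(x) := \pi^2(x,\ell) \big/ \sum_{x' \in \mathcal{X}_t} \pi^2(x', \ell)$ whenever the denominator is positive, and assign an arbitrary fixed distribution in $\Delta(\mathcal{X}_t)$ otherwise; evaluating at $\pi^2 = \Pi_t^2$ and $\ell = L_t^{[1,2]}$ then delivers \eqref{eq_pi_rel}. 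The one point requiring care is the zero-denominator case: the marginal $\sum_{x'} \Pi_t^2(x', \ell)$ can vanish only for values $\ell$ that are not realized with positive probability given $(M_t^2, \Gamma_{0:t-1}^{[2,1]})$, so the arbitrary assignment on this null event does not affect the almost-sure identity. I expect the main conceptual step, rather than any computational obstacle, to be recognizing that passing from agent $2$'s joint belief over $(X_t, L_t^{[1,2]})$ to agent $1$'s marginal belief over $X_t$ is exactly a conditioning on $L_t^{[1,2]}$, a quantity that the nested information structure guarantees is available to agent $1$.
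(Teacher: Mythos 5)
Your proposal is correct and follows essentially the same route as the paper's own proof in Appendix A: decompose $M_t^1$ as $\{M_t^2, L_t^{[1,2]}\}$, adjoin $\Gamma_{0:t-1}^{[2,1]}$ via \eqref{pi_1_with_gamma}, and apply Bayes' rule so that $\Pi_t^1(x) = \Pi_t^2(x,\ell)\big/\sum_{x'}\Pi_t^2(x',\ell)$, the denominator being exactly the marginal the paper writes as $\sum_{\tilde{s}_t^2}\mathbb{I}(l_t^{[1,2]}\in\tilde{s}_t^2)\,\pi_t^2(\tilde{s}_t^2)$. Your explicit handling of the zero-denominator case is a detail the paper leaves implicit, but it does not change the argument.
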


\begin{proof}
The proof is omitted due to space constraints, but can be found in our online preprint \cite[Appendix A]{dave2021dynamic}.
\end{proof}

Thus, at each time $t$, we can equivalently write the control action as
    $U_t^1 = \Gamma_t^{[2,1]}(\Pi_t^1) = \Gamma_t^{[2,1]}\big(e_t(\Pi_t^2,L_t^{[1,2]})\big).$
Next, we construct a new state for agent $2$ as
\begin{gather}
    S_t^2 := \big\{X_t, L_t^{[1,2]}, \Pi_t^2\big\}, \quad t = 0, \dots, T,
\end{gather}
that takes values in the finite collection of sets ${\mathcal{S}}_t^2$. Our goal is to set up an equivalent centralized control problem for agent $2$ with the state $S_t^2$ and control action $(\Gamma_t^{[2,1]},U_t^2)$ at each time $t$. However, we require some interim results before we can prove that the state $S_t^2$ is sufficient for input-output mapping.
Next, we show that the information states $\Pi_t^2$ and $\Pi_t^1$ at all $t$ are independent from the strategies $(\boldsymbol{g},\boldsymbol{\psi}^2)$.

\begin{lemma} \label{pi_2_1}
At each time $t = 0,\dots,T-1$, there exists a function $\tilde{f}_t^{{2}}(\cdot)$ independent from $(\boldsymbol{g},\boldsymbol{\psi}^2)$, such that
\begin{equation} \label{pi_2_1_relation}
    \Pi_{t+1}^{{2}} = \Tilde{f}_{t}^{{2}}(\Pi_t^{{2}},\Gamma_t^{{[2,1]}},U_t^2,Z_{t+1}^{{2}}),
\end{equation}
and subsequently, for any Borel subset $P^2 \subset \mathcal{P}^2_{t+1}$,
\begin{equation*}
    \mathbb{P}(\Pi_{t+1}^{{2}} \in P^2|~M_t^2, \Gamma_{0:t}^{[2,1]}) = \mathbb{P}(\Pi_{t+1}^{{2}} \in P^2|~\Pi_t^2, U_t^2, \Gamma_{t}^{[2,1]}).
\end{equation*}
\end{lemma}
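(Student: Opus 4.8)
The plan is to derive the recursion \eqref{pi_2_1_relation} as a strategy-independent nonlinear filter, and then read off the controlled-Markov property from it. First I would observe from \eqref{m2_def}--\eqref{m1_def} that the private information agent $1$ acquires between $t$ and $t+1$ is $L_{t+1}^{[1,2]} = \{L_t^{[1,2]}, Y_{t+1}^1, U_t^1\}$, while agent $2$'s conditioning set grows by $\{\Gamma_t^{[2,1]}, U_t^2, Y_{t+1}^2\}$, since $M_{t+1}^2 = \{M_t^2, U_t^2, Y_{t+1}^2\}$. Writing $\Pi_{t+1}^2(x_{t+1},\ell_{t+1})$ for a generic value of $(X_{t+1}, L_{t+1}^{[1,2]})$ and applying Bayes' rule with respect to the newly acquired observation $Y_{t+1}^2$, I would express $\Pi_{t+1}^2$ as the ratio of a joint prediction-times-likelihood term to its normalization over $(x_{t+1}, \ell_{t+1})$.

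The core of the argument is to show that this joint term is a strategy-independent function of $(\Pi_t^2, \Gamma_t^{[2,1]}, U_t^2, Z_{t+1}^2)$. Marginalizing over $X_t = x_t$, the prior factor is exactly $\Pi_t^2(x_t, \ell_t)$: here I would use that both $\Gamma_t^{[2,1]} = \psi_t^{[2,1]}(M_t^2)$ and $U_t^2 = g_t^2(M_t^2)$ are deterministic functions of $M_t^2$, so conditioning on them adds nothing beyond $(M_t^2, \Gamma_{0:t-1}^{[2,1]})$. By independence of the primitive variables $W_t, V_{t+1}^1, V_{t+1}^2$ together with the controlled-Markov property, the remaining factor splits into (i) the indicator $\mathbbm{1}\{u_t^1 = \Gamma_t^{[2,1]}(e_t(\Pi_t^2, \ell_t))\}$, which is where Lemma \ref{pi_rel} is essential, since it lets agent $2$ reconstruct agent $1$'s action for each hypothetical private history $\ell_t$ without knowing $\boldsymbol{g}^1$; (ii) the transition factor $\sum_{w_t: f_t(x_t, u_t^{1:2}, w_t) = x_{t+1}} \mathbb{P}(W_t = w_t)$; and (iii) the observation likelihoods $\mathbb{P}(Y_{t+1}^k = y_{t+1}^k \mid x_{t+1})$ for $k \in \{1,2\}$, fixed by $h_{t+1}^k$ and the law of $V_{t+1}^k$. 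Each factor depends only on the problem data and the explicitly passed arguments, and the normalization is just their sum, so the resulting map $\tilde{f}_t^2$ is independent of $(\boldsymbol{g}, \boldsymbol{\psi}^2)$, establishing \eqref{pi_2_1_relation}.

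For the second claim, I would use \eqref{pi_2_1_relation} to write $\{\Pi_{t+1}^2 \in P^2\}$ as an event determined by $(\Pi_t^2, \Gamma_t^{[2,1]}, U_t^2, Z_{t+1}^2)$. Conditioned on $(M_t^2, \Gamma_{0:t}^{[2,1]})$, the quantities $\Pi_t^2$, $\Gamma_t^{[2,1]}$ and $U_t^2$ are fixed, so the only randomness left is in $Z_{t+1}^2 = \{Y_{t+1}^2, U_t^2\}$, i.e., in $Y_{t+1}^2$. Summing over the values of $Z_{t+1}^2$ that map into $P^2$, it suffices to show that $\mathbb{P}(Z_{t+1}^2 = z_{t+1}^2 \mid M_t^2, \Gamma_{0:t}^{[2,1]})$ depends on the conditioning only through $(\Pi_t^2, U_t^2, \Gamma_t^{[2,1]})$. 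This reduces to the predictive law of $Y_{t+1}^2$, which I would expand as $\sum_{x_{t+1}} \mathbb{P}(Y_{t+1}^2 = y_{t+1}^2 \mid x_{t+1})\, \mathbb{P}(X_{t+1} = x_{t+1} \mid M_t^2, \Gamma_{0:t}^{[2,1]})$ and then propagate the state law through $\Pi_t^2$ using the same factors (i)--(ii). Since every term depends on the conditioning only through $(\Pi_t^2, \Gamma_t^{[2,1]}, U_t^2)$, the conditional law of $\Pi_{t+1}^2$ equals $\mathbb{P}(\Pi_{t+1}^2 \in P^2 \mid \Pi_t^2, U_t^2, \Gamma_t^{[2,1]})$, as claimed.

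I expect the main obstacle to be the conditional-independence factorization of the joint prediction-times-likelihood term: carefully justifying, from the primitive-independence assumption and the memory definitions \eqref{m2_def}--\eqref{m1_def}, that given $X_t$ and the deterministic action pair the newly generated quantities $X_{t+1}, Y_{t+1}^1, Y_{t+1}^2, U_t^1$ are conditionally independent of the past $(M_t^2, \Gamma_{0:t-1}^{[2,1]})$ except through $\Pi_t^2$. The step that makes the filter — and hence $\tilde{f}_t^2$ — genuinely strategy-independent is the use of Lemma \ref{pi_rel} to turn agent $1$'s action into a deterministic function of $(\Pi_t^2, \ell_t, \Gamma_t^{[2,1]})$.
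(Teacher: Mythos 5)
Your proposal is correct and follows essentially the same route as the paper's proof in Appendix B: a Bayes'-rule filtering recursion for $\Pi_{t+1}^2$ whose kernel is made strategy-independent by using Lemma \ref{pi_rel} to reconstruct $U_t^1 = \Gamma_t^{[2,1]}\big(e_t(\Pi_t^2, L_t^{[1,2]})\big)$ for each hypothetical private history, followed by reducing the second claim to the predictive law of $Z_{t+1}^2$ depending only on $(\Pi_t^2, \Gamma_t^{[2,1]}, U_t^2)$. The only difference is presentational: the paper packages the update as deterministic maps $\phi_t, \hat{\phi}_t$ of the primitive noises and sums indicator functions, whereas you factor the kernel explicitly into the action indicator, transition, and observation likelihoods, which amounts to the same computation.
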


\begin{proof}
The proof is omitted due to space constraints, but can be found in our online preprint \cite[Appendix B]{dave2021dynamic}.
\end{proof}

\begin{lemma} \label{pi_1_1}
At each time $t = 0,\dots,T-1$, there exists a function $\tilde{f}_t^{{1}}(\cdot)$ independent from $(\boldsymbol{g}, \boldsymbol{\psi}^2)$, such that
\begin{equation} \label{pi_1_1_relation}
    \Pi_{t+1}^{{1}} = \Tilde{f}_{t}^{{1}}(\Pi_t^{{1}},U_t^{{1}}, U_t^2, Z_{t+1}^{{1}}),
\end{equation}
and subsequently, for any Borel subset $P^1 \subset \mathcal{P}^1_{t+1}$,
\begin{equation*}
    \mathbb{P}(\Pi_{t+1}^{{1}} \in P^1|~M_t^1, U_t^2) = \mathbb{P}(\Pi_{t+1}^{{1}} \in P^1|~\Pi_t^1, U_t^1, U_t^2).
\end{equation*}
\end{lemma}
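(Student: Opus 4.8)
The plan is to recognize $\Pi_t^1$ as the belief state of the partially observed Markov decision process that Lemma~\ref{lem_pbp_suff} and Theorem~\ref{pbp_struct_result} associate with agent~$1$, and to derive its update as a standard nonlinear-filtering (prediction--correction) recursion whose functional form is dictated entirely by the system model. I would proceed in close analogy to the proof of Lemma~\ref{pi_2_1}, mirroring the argument there for $\Pi_t^2$.

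First I would use $M_{t+1}^1 = M_t^1 \cup Z_{t+1}^1$ with $Z_{t+1}^1 = \{Y_{t+1}^{1:2}, U_t^{1:2}\}$ and carry out a prediction step,
\[
\Pi_{t+1|t}^1(x) := \mathbb{P}^{\boldsymbol{g}}(X_{t+1}=x \mid M_t^1, U_t^{1:2}) = \sum_{x' \in \mathcal{X}_t}\ \sum_{w\,:\, f_t(x', U_t^{1:2}, w)=x} \Pi_t^1(x')\,\mathbb{P}(W_t=w),
\]
followed by a Bayes correction using the realized $Y_{t+1}^{1:2}$ contained in $Z_{t+1}^1$,
\[
\Pi_{t+1}^1(x) = \frac{\ell_{t+1}(Y_{t+1}^{1:2} \mid x)\,\Pi_{t+1|t}^1(x)}{\sum_{x''} \ell_{t+1}(Y_{t+1}^{1:2} \mid x'')\,\Pi_{t+1|t}^1(x'')},
\]
where, by the independence of $V_{t+1}^1$ and $V_{t+1}^2$, the likelihood factors as $\ell_{t+1}((y^1,y^2)\mid x) = \mathbb{P}(h_{t+1}^1(x,V_{t+1}^1)=y^1)\,\mathbb{P}(h_{t+1}^2(x,V_{t+1}^2)=y^2)$. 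Composing these two maps defines $\tilde f_t^1$, whose only arguments are $(\Pi_t^1, U_t^{1:2}, Z_{t+1}^1)$.

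The key point, and the step I expect to require the most care, is the strategy independence. The transition kernel $\mathbb{P}(W_t=\cdot)$ and the likelihood $\ell_{t+1}$ are assembled solely from the primitive distributions and the fixed maps $f_t, h_{t+1}^{1:2}$, so neither references $\boldsymbol{g}$ or $\boldsymbol{\psi}^2$. The only place a strategy could enter is through the conditional law of $(X_t, W_t, V_{t+1}^{1:2})$ given $M_t^1$, so I would verify that this law factors as the product of $\Pi_t^1$ with the prior marginals of $W_t$, $V_{t+1}^1$, and $V_{t+1}^2$. This is exactly where the primitive-independence assumption does the work: $M_t^1$ and $X_t$ are deterministic functions of $\{X_0, W_{0:t-1}, V_{0:t}^{1:2}\}$ (the past actions being functions of the memories), while each of $W_t, V_{t+1}^1, V_{t+1}^2$ is independent of that collection and hence of $(X_t, M_t^1)$, retaining its prior marginal after conditioning. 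This factorization is what makes $\tilde f_t^1$ well defined and strategy-free.

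Finally, the conditional-independence claim follows immediately from the recursion. Conditioning on $M_t^1$ fixes both $\Pi_t^1$ and $U_t^1 = g_t^1(M_t^2,\Pi_t^1)$, and by the factorization above the conditional distribution of the new data $Z_{t+1}^1$—equivalently of $Y_{t+1}^{1:2}$—given $(M_t^1, U_t^2)$ depends on $M_t^1$ only through $(\Pi_t^1, U_t^1, U_t^2)$. Since $\Pi_{t+1}^1$ is the deterministic image of $(\Pi_t^1, U_t^1, U_t^2, Z_{t+1}^1)$ under $\tilde f_t^1$, its conditional law given $(M_t^1, U_t^2)$ coincides with its conditional law given $(\Pi_t^1, U_t^1, U_t^2)$ for every Borel set $P^1 \subseteq \mathcal{P}_{t+1}^1$, which establishes the claimed Markov property.
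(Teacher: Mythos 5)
Your proposal is correct and follows essentially the same route as the paper, which omits this proof as ``similar to Lemma~\ref{pi_2_1}'' and, in Appendix~B, establishes that analogue by exactly the Bayes-filter computation you describe: a total-probability/prediction step over the primitive disturbances, a Bayes correction using the new observations, strategy-independence via the independence of the primitive random variables, and the Markov property obtained by expressing the conditional law of the new information in terms of the current information state. Your prediction--correction packaging and the factored likelihood are just a cleaner organization of that same argument specialized to $\Pi_t^1$.
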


\begin{proof}
The proof is similar to Lemma \ref{pi_2_1} and is omitted.
\end{proof}



\begin{lemma} \label{lem_pi_cost}
At each time $t = 0,\dots,T$, there exists a function $\tilde{c}_t^k(\cdot)$ for each $k \in \{1,2\}$ such that
\begin{align}
    \mathbb{E}^{\boldsymbol{g}}[c_t(X_t,U_t^{1:2}) | M_t^1,U_t^{1:2}] &= \tilde{c}_t^1(\Pi_t^1,U_t^{1:2}), \label{eq_lem_pi_cost_1} \\
    \mathbb{E}^{\boldsymbol{g}}[\tilde{c}_t^1(\Pi_t^1,U_t^{1:2}) | M_t^2,\Gamma_t^{[2,1]},U_t^{2}] &= \tilde{c}_t^2(\Pi_t^2,\Gamma_t^{[2,1]},U_t^{2}). \label{eq_lem_pi_cost_2}
\end{align}
\end{lemma}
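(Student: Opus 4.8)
The plan is to verify each identity by a finite-sum expansion of the conditional expectation, exploiting the fact that under a fixed pair $(\boldsymbol{g},\boldsymbol{\psi}^2)$ every quantity appearing in the conditioning, other than the underlying state $X_t$ and the hidden set $L_t^{[1,2]}$, is a deterministic function of the memory being conditioned on.

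For \eqref{eq_lem_pi_cost_1}, I would first note that $U_t^1 = g_t^1(M_t^2,\Pi_t^1)$ and $U_t^2 = g_t^2(M_t^2)$ are both $M_t^1$-measurable, since $M_t^2 \subseteq M_t^1$ and $\Pi_t^1$ is a function of $M_t^1$. Hence conditioning on $U_t^{1:2}$ in addition to $M_t^1$ leaves the conditional law of $X_t$ unchanged, i.e. $\mathbb{P}^{\boldsymbol{g}}(X_t \mid M_t^1, U_t^{1:2}) = \Pi_t^1$. Because $\mathcal{X}_t$ is finite, the conditional expectation expands to $\sum_{x \in \mathcal{X}_t} c_t(x, U_t^{1:2})\,\Pi_t^1(x)$, which depends on $(M_t^1, U_t^{1:2})$ only through $(\Pi_t^1, U_t^{1:2})$; this is the required $\tilde{c}_t^1$.

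For \eqref{eq_lem_pi_cost_2}, I would reduce the conditioning first: given $\boldsymbol{\psi}^2$, the prescription $\Gamma_t^{[2,1]} = \psi_t^{[2,1]}(M_t^2)$, the action $U_t^2 = g_t^2(M_t^2)$, and the past prescriptions $\Gamma_{0:t-1}^{[2,1]}$ are all $M_t^2$-measurable, so conditioning on $(M_t^2,\Gamma_t^{[2,1]},U_t^2)$ reduces to conditioning on $M_t^2$, under which $\Pi_t^2$ from \eqref{pi_2_def} is determined. Inside the expectation I substitute $\Pi_t^1 = e_t(\Pi_t^2, L_t^{[1,2]})$ from Lemma \ref{pi_rel} and $U_t^1 = \Gamma_t^{[2,1]}(\Pi_t^1)$, so the integrand becomes a function of $L_t^{[1,2]}$ parameterized by the frozen triple $(\Pi_t^2,\Gamma_t^{[2,1]},U_t^2)$. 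Since the conditional law of $L_t^{[1,2]}$ is the $X_t$-marginal of $\Pi_t^2$, namely $\sum_{x\in\mathcal{X}_t}\Pi_t^2(x,\ell)$, summing over $\ell \in \mathcal{L}_t^{[1,2]}$ gives
\[
\tilde{c}_t^2(\Pi_t^2,\Gamma_t^{[2,1]},U_t^2) = \sum_{\ell} \tilde{c}_t^1\big(e_t(\Pi_t^2,\ell),\,\Gamma_t^{[2,1]}(e_t(\Pi_t^2,\ell)),\,U_t^2\big)\sum_{x}\Pi_t^2(x,\ell),
\]
which is the claimed function.

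I expect the only delicate point to be this second step, where three bookkeeping facts must line up: that $\Pi_t^2$ is frozen by the conditioning, that the marginal law of $L_t^{[1,2]}$ is recoverable from $\Pi_t^2$ by marginalizing out $X_t$ in \eqref{pi_2_def}, and that $\Pi_t^1$ may be replaced pointwise by $e_t(\Pi_t^2,L_t^{[1,2]})$ via Lemma \ref{pi_rel}. I would also note in passing that the expectations written under $\boldsymbol{g}$ and the distribution $\Pi_t^2$ written under $(\boldsymbol{g},\boldsymbol{\psi}^2)$ agree, since the prescription is a deterministic function of $M_t^2$ under the consistent choice fixed around \eqref{pi_1_with_gamma}. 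Everything else is a finite-support expansion and requires no estimates.
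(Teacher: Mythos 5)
Your proposal is correct and takes essentially the same route as the paper: a finite-sum expansion of the conditional expectation in which the actions (and prescription) are dropped from the conditioning because they are determined by the memory under the fixed strategies. The paper only writes out the first identity and asserts the second ``using the same arguments''; your explicit treatment of \eqref{eq_lem_pi_cost_2} --- substituting $\Pi_t^1 = e_t(\Pi_t^2, L_t^{[1,2]})$ via Lemma \ref{pi_rel} and averaging over $L_t^{[1,2]}$ with the $X_t$-marginal of $\Pi_t^2$ --- is precisely the argument the paper leaves implicit.
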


\begin{proof}
We first prove \eqref{eq_lem_pi_cost_1}. Let $m_t^1$, $u_t^{1:2}$, and $\pi_t^1$ be the realizations of the random variables $M_t^1$, $U_t^{1:2}$, and the conditional distribution $\Pi_t^1$ at each time $t = 0,\dots,T$. Then, we expand the expectation as
$\mathbb{E}^{\boldsymbol{g}}[c_t(X_t,U_t^{1:2}) ~|~ m_t^1, u_t^{1:2}]
    = \sum_{x_t} c_t(x_t,u_t^{1:2}) \cdot \mathbb{P}^{\boldsymbol{g}}(X_t = x_t ~|~ m_t^1,u_t^{1:2})
    = \sum_{x_t} c_t(x_t,u_t^{1:2}) \cdot \pi_t^1(x_t) =: \tilde{c}_t^1(\pi_t^1,u_t^{1:2}),$
where we can drop the control actions $u_t^{1:2}$ from the conditioning because they known given the strategy $\boldsymbol{g}$ and $m_t^1$. We prove \eqref{eq_lem_pi_cost_2} using the same arguments as above.
\end{proof}

Next, we prove that the state $S_t^2$ is sufficient for input-output mapping from the perspective of agent $2$.

\begin{lemma} \label{St_suff_pbp2}
    At each time $t$, the state $S_t^2 \in \mathcal{S}_t^2$ satisfies the following properties stated by Witsenhausen \cite{witsenhausen1976some}:
    
    1) There exist functions $\hat{f}^{{2}}_t(\cdot)$ and $\hat{h}^{{2}}_t(\cdot)$ for all $t=0,\dots,T-1$, such that
      \begin{align}
    	S^{{2}}_{t+1} &= \hat{f}^{{2}}_{t}(S^{{2}}_t, \Gamma_t^{[2,1]}, U_t^{{2}}, W_{t}, V_{t+1}^{1:2}),
    	\label{eq_St_pbp2_1} \\
    	Z^2_{t+1} &= \hat{h}^2_{t+1}( S^2_t,\Gamma_{t}^{[2,1]}, U_{t}^2, W_t, V_t^{1:2} ).
    	\label{eq_St_pbp2_2}
      \end{align}
      
    2) There exist functions $\hat{c}^{{2}}_t(\cdot)$, such that
        \begin{align}
    	c_t(X_t,U_t^{1:2}) &= \hat{c}^{{2}}_t({S}^{{2}}_t,\Gamma_t^{[2,1]},U^{{2}}_t), \quad t = 0,\dots,T. \label{eq_St_pbp2_3}
      \end{align}
\end{lemma}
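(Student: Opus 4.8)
The plan is to prove the lemma by the same direct substitution strategy used for Lemma \ref{lem_pbp_suff}: for each of the identities \eqref{eq_St_pbp2_1}--\eqref{eq_St_pbp2_3} I expand the left-hand side using the primitive dynamics \eqref{st_eq}--\eqref{ob_eq} and the definitions of the memories, and then rewrite every term so that only the quantities listed on the right-hand side remain. The state $S_t^2 = \{X_t, L_t^{[1,2]}, \Pi_t^2\}$ is designed to bundle exactly the three ingredients needed for this. The single enabling observation, which I would isolate first, is that agent $2$'s state lets me reconstruct agent $1$'s action: by Lemma \ref{pi_rel} we have $\Pi_t^1 = e_t(\Pi_t^2, L_t^{[1,2]})$, and therefore
\begin{gather}
    U_t^1 = \Gamma_t^{[2,1]}(\Pi_t^1) = \Gamma_t^{[2,1]}\big(e_t(\Pi_t^2, L_t^{[1,2]})\big), \label{eq_u1_recon}
\end{gather}
so $U_t^1$ is a measurable function of the pair $(S_t^2, \Gamma_t^{[2,1]})$ alone. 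Once \eqref{eq_u1_recon} is in hand, the remainder is bookkeeping.

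I would dispatch the cost identity \eqref{eq_St_pbp2_3} first, as it is the most transparent. Since $X_t$, $\Pi_t^2$, and $L_t^{[1,2]}$ are all components of $S_t^2$, substituting \eqref{eq_u1_recon} into $c_t(X_t, U_t^{1:2})$ expresses the cost purely in terms of $(X_t, \Pi_t^2, L_t^{[1,2]}, \Gamma_t^{[2,1]}, U_t^2)$, which defines $\hat{c}_t^2(S_t^2, \Gamma_t^{[2,1]}, U_t^2)$.

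For the state-update identity \eqref{eq_St_pbp2_1} I would treat the three components of $S_{t+1}^2$ separately. First, $X_{t+1} = f_t(X_t, U_t^{1:2}, W_t)$ by \eqref{st_eq}, and, using \eqref{eq_u1_recon}, this is a function of $(S_t^2, \Gamma_t^{[2,1]}, U_t^2, W_t)$. Second, $L_{t+1}^{[1,2]} = L_t^{[1,2]} \cup \{Y_{t+1}^1, U_t^1\}$, where $Y_{t+1}^1 = h_{t+1}^1(X_{t+1}, V_{t+1}^1)$ by \eqref{ob_eq}; since $X_{t+1}$ and $U_t^1$ have already been written in terms of $(S_t^2, \Gamma_t^{[2,1]}, U_t^2, W_t)$, this component is a function of $(S_t^2, \Gamma_t^{[2,1]}, U_t^2, W_t, V_{t+1}^1)$. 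Third, by Lemma \ref{pi_2_1}, $\Pi_{t+1}^2 = \tilde{f}_t^2(\Pi_t^2, \Gamma_t^{[2,1]}, U_t^2, Z_{t+1}^2)$, and since $Z_{t+1}^2 = \{Y_{t+1}^2, U_t^2\}$ with $Y_{t+1}^2 = h_{t+1}^2(X_{t+1}, V_{t+1}^2)$ by \eqref{z_2} and \eqref{ob_eq}, this component is a function of $(S_t^2, \Gamma_t^{[2,1]}, U_t^2, W_t, V_{t+1}^2)$. Collecting the three pieces yields $\hat{f}_t^2$ with the claimed noise dependence $(W_t, V_{t+1}^{1:2})$. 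The observation identity \eqref{eq_St_pbp2_2} is then immediate, since the same computation shows $Z_{t+1}^2 = \{Y_{t+1}^2, U_t^2\}$ is a function of $(S_t^2, \Gamma_t^{[2,1]}, U_t^2, W_t, V_{t+1}^2)$, which supplies the required observation function.

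I expect the only genuinely non-mechanical step to be the reconstruction \eqref{eq_u1_recon}, and this is exactly where Lemma \ref{pi_rel} is load-bearing: had the state retained only $\Pi_t^2$, I could not recover the argument $\Pi_t^1$ of the prescription and hence could not evaluate $U_t^1$; it is the inclusion of $L_t^{[1,2]}$ as a separate component of $S_t^2$ that closes this gap. A secondary point worth verifying is the noise accounting: the realized $V_{t+1}^1$ enters only through $Y_{t+1}^1 \subset L_{t+1}^{[1,2]}$, whereas $\Pi_{t+1}^2$ is driven by $Z_{t+1}^2$ alone (Lemma \ref{pi_2_1}) and so depends on $V_{t+1}^1$ only through its known distribution, not its realization. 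Hence the stated dependence on $V_{t+1}^{1:2}$ is correct, and in fact slightly conservative.
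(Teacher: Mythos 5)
Your proposal is correct and follows essentially the same route as the paper, whose own proof is only a one-sentence sketch of the same substitution argument; you have filled in the details, correctly isolating the key step $U_t^1 = \Gamma_t^{[2,1]}\big(e_t(\Pi_t^2, L_t^{[1,2]})\big)$ from Lemma \ref{pi_rel} and invoking Lemma \ref{pi_2_1} for the $\Pi_{t+1}^2$ component. Your noise accounting (with $V_{t+1}^{1:2}$ rather than $V_t^{1:2}$ entering the observation map) is in fact the consistent indexing.
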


\begin{proof}
To prove these results, we expand the LHS in each of \eqref{eq_St_pbp2_1}-\eqref{eq_St_pbp2_3} to write them in terms of the variables in the RHS, and construct appropriate functions $\hat{f}_t^1(\cdot)$, $\hat{h}_t^1(\cdot)$, and $\hat{c}_t^1(\cdot)$ for each time $t$.
\end{proof}

Lemma \ref{St_suff_pbp2} allows us to construct a centralized stochastic control problem for agent $2$ with state $S_t^2$ that evolves using \eqref{eq_St_pbp2_1}, control action $(\Gamma_t^{[2,1]},U_t^2)$, observation $Z_t^2$ given by \eqref{eq_St_pbp2_2}, and cost $\hat{c}_t^2(S_t^2, \Gamma_t^{[2,1]},U_t^2)$ at each time $t=0,\dots,T$. Furthermore, the performance criterion can be written as a function of the prescription strategy $\boldsymbol{\psi}^2$ and control strategy $\boldsymbol{g}^2$ as $\mathcal{J}^2(\boldsymbol{\psi}^2, \boldsymbol{g}^2) = \mathbb{E}^{\boldsymbol{g}} \left[\sum_{t=0}^T\hat{c}_t^2(S_t^2, \Gamma_t^{[2,1]},U_t^2)\right]$. 

\begin{problem} \label{problem_3}
The problem for agent $2$ is
    $\inf_{\boldsymbol{\psi}^2,\boldsymbol{g}^{2}} \mathcal{J}^1(\boldsymbol{\psi}^2,\boldsymbol{g}^{2})$,
given the probability distributions of the primitive random variables $\{X_0,W_{t},V_{t}^{1:2}:t = 0,\dots,T\}$, and the functions $\big\{\hat{c}^2_t,\hat{f}^2_t,\hat{h}^2_t:t=0,\dots,T\big\}$.
\end{problem}

In Problem \ref{problem_3}, at each time $t$, the component $\Pi_t^2$ of the state $S_t^2$ is completely observed by agent $2$. Furthermore, the unobserved component $\big\{X_t,L_t^{[1,2]}\big\}$ can be estimated by agent $2$ using the probability distribution $\Pi_t^2$. This yields the following structural result for agent $2$ in Problem \ref{problem_3}.

\begin{theorem} \label{pbp_struct_result_2}
    For agent $2$ in Problem \ref{problem_3}, without loss of optimality, we can restrict attention  to prescription strategies $\boldsymbol{\psi}^{*2}$ and control strategies $\boldsymbol{g}^{*2}$ with the structural form
    \begin{gather} \label{eq_pbp_struct_result_2_1}
        \Gamma_t^{[2,1]} = \psi_t^{*[2,1]}\big(\Pi_t^2\big), \quad t = 0,\dots,T, \\
        U_t^{2} = g_t^{*2}\big(\Pi_t^2\big), \quad t = 0,\dots,T. \label{eq_pbp_struct_result_2_2}
    \end{gather}
\end{theorem}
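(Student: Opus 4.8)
The plan is to recognize Problem \ref{problem_3} as a belief-state (fully observed) Markov decision process in which $\Pi_t^2$ plays the role of the controllable state, and then to invoke the principle of optimality through backward induction. The two ingredients that a standard argument of this type requires — a controlled-Markov evolution of the candidate state, and a stage cost expressible through that state — have already been secured by Lemmas \ref{pi_2_1} and \ref{lem_pi_cost}, so the remaining work reduces to assembling them correctly rather than proving anything substantially new.

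First I would note that when agent $2$ selects $(\Gamma_t^{[2,1]}, U_t^2)$ her information is the pair $(M_t^2, \Gamma_{0:t-1}^{[2,1]})$, and that $\Pi_t^2$ in \eqref{pi_2_def} is a function of this pair. By Lemma \ref{pi_2_1}, the process $\{\Pi_t^2\}$ is a controlled Markov chain whose transition kernel depends on the past only through $(\Pi_t^2, \Gamma_t^{[2,1]}, U_t^2)$; by \eqref{eq_lem_pi_cost_2} in Lemma \ref{lem_pi_cost}, the conditional expected stage cost given $(M_t^2, \Gamma_t^{[2,1]}, U_t^2)$ equals $\tilde{c}_t^2(\Pi_t^2, \Gamma_t^{[2,1]}, U_t^2)$, a function of $\Pi_t^2$ and the controls alone. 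Together these establish $\Pi_t^2$ as an information state for agent $2$ in the sense of \cite[page 79]{varaiya_book}, which is precisely the structure needed to mirror the argument already used for agent $1$ in Theorem \ref{pbp_struct_result}.

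Next I would carry out backward induction on a value function $V_t : \mathcal{P}_t^2 \to \mathbb{R}_{\geq 0}$. At the terminal step I set $V_T(\pi_T^2) := \min_{\gamma, u^2} \tilde{c}_T^2(\pi_T^2, \gamma, u^2)$, where the minimization ranges over the finite product set $\mathcal{F}_T^{[2,1]} \times \mathcal{U}_T^2$, so a minimizer exists and defines $\psi_T^{*[2,1]}(\pi_T^2)$ and $g_T^{*2}(\pi_T^2)$ as functions of $\pi_T^2$ alone. For $t < T$, I set $V_t(\pi_t^2) := \min_{\gamma, u^2} \big[ \tilde{c}_t^2(\pi_t^2, \gamma, u^2) + \mathbb{E}[V_{t+1}(\Pi_{t+1}^2) \mid \Pi_t^2 = \pi_t^2, \Gamma_t^{[2,1]} = \gamma, U_t^2 = u^2] \big]$, where the conditional expectation is well defined and independent of the remaining past precisely because of the Markov property supplied by Lemma \ref{pi_2_1}. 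Standard finite-horizon arguments then show that $V_t(\Pi_t^2)$ is the optimal cost-to-go from time $t$ under any admissible pair $(\boldsymbol{\psi}^2, \boldsymbol{g}^2)$, and that the per-stage minimizers $(\psi_t^{*[2,1]}(\Pi_t^2), g_t^{*2}(\Pi_t^2))$ attain it, which is exactly the structural form \eqref{eq_pbp_struct_result_2_1}-\eqref{eq_pbp_struct_result_2_2}.

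The main obstacle is verifying that, throughout the induction, conditioning on the full agent-$2$ information $(M_t^2, \Gamma_{0:t-1}^{[2,1]})$ may be replaced by conditioning only on $(\Pi_t^2, \Gamma_t^{[2,1]}, U_t^2)$ — both for the one-stage cost and, more delicately, for the expected cost-to-go $\mathbb{E}[V_{t+1}(\Pi_{t+1}^2) \mid \cdot]$. This replacement is the content of the second displayed equation in Lemma \ref{pi_2_1} and of \eqref{eq_lem_pi_cost_2}; once they are in hand, the induction step is routine. A secondary point worth stating explicitly is that the joint control $(\Gamma_t^{[2,1]}, U_t^2)$ lives in the finite set $\mathcal{F}_t^{[2,1]} \times \mathcal{U}_t^2$, so every stagewise minimum is attained and the backward recursion is well posed.
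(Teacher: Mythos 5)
Your proposal is correct and follows essentially the same route as the paper, which simply invokes the standard information-state argument for partially observed Markov decision processes \cite[page 79]{varaiya_book}; you have merely written out that argument explicitly, correctly identifying Lemma \ref{pi_2_1} (controlled Markov evolution of $\Pi_t^2$) and \eqref{eq_lem_pi_cost_2} of Lemma \ref{lem_pi_cost} (stage cost as a function of $\Pi_t^2$ and the controls) as the two ingredients that make the backward induction go through.
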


\begin{proof}
This result follows from standard arguments for partially observed Markov decision processes \cite[page 79]{varaiya_book}.
\end{proof}

Recall that using Lemmas \ref{lem_psi_g_relation} and \ref{lem_psi_g_relation_inv}, given a prescription strategy $\boldsymbol{\psi}^{*2}$ of the form in Theorem \ref{pbp_struct_result_2}, we can construct a corresponding control strategy $\boldsymbol{g}^{*1}$ for agent $1$ as $g_t^{*1}(\Pi_t^{1:2}) := \psi_t^{*[2,1]}(\Pi_t^2)(\Pi_t^1)$, for all $t =0,\dots,T$. Then, $g_t^{*1}$ and $\psi_t^{*[2,1]}$ yield the same control action $U_t^1$ at each time $t=0,\dots,T$. 
Thus, we can derive an optimal team strategy $\boldsymbol{g}^*$ for Problem \ref{problem_1} with the structural form
\begin{align}
    U_t^1 &= g_t^{*1}(\Pi_t^1,\Pi_t^2), \quad  t =0,\dots,T, \label{eq_corr_1}\\
    U_t^2 &= g_t^{*2}(\Pi_t^2), \quad \quad \; \;  t=0,\dots,T. \label{eq_corr_2}
\end{align}
To this end, we denote the set of feasible team strategies consistent with \eqref{eq_corr_1} and \eqref{eq_corr_2} by $\mathcal{G}''$.

\section{The Dynamic Program}

In this section, we present a DP to solve Problem \ref{problem_1} using the information states.
Recall that at each $t=0,\dots,T$, the memory $M_t^2$ and subsequently, the information state $\Pi_t^2$ are available to agent $1$. We extend the memory of agent $1$ at the final time step $T$ to also include $U_T^2$. Any team strategy $\boldsymbol{g} \in \mathcal{G}''$ can be implemented using the extended memory $\{\Pi_T^1, \Pi_T^2, U_T^2\}$ for agent $1$, by discarding $U_T^2$. Next, we show that the every strategy using the extended memory can also be implemented using only $\{\Pi_T^1, \Pi_T^2\}$.

\begin{lemma} \label{lem_extended}
Let $\bar{g}_T^1:\mathcal{P}_T^1\times \mathcal{P}_T^2 \times \mathcal{U}_T^2 \to \mathcal{U}_T^1$ be an extended control law for time $T$. Then, we can construct a control law $g_T^1:\mathcal{P}_T^1 \times \mathcal{P}_T^2 \to \mathcal{U}_T^1$ such that
\begin{gather}
    U_T^1 = \bar{g}_T^1(\Pi_T^1,\Pi_T^2,U_T^2) = g_T^1(\Pi_T^1,\Pi_T^2).
\end{gather}
\end{lemma}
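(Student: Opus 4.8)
The plan is to exploit the fact that, within the restricted class $\mathcal{G}''$, agent $2$'s final action $U_T^2$ is already a deterministic function of the information state $\Pi_T^2$, so that appending $U_T^2$ to the arguments of agent $1$'s control law conveys nothing beyond $\Pi_T^2$. By the structural form \eqref{eq_corr_2} established in Theorem \ref{pbp_struct_result_2}, we may write $U_T^2 = g_T^{*2}(\Pi_T^2)$ for a control law $g_T^{*2}: \mathcal{P}_T^2 \to \mathcal{U}_T^2$.

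First I would fix this law $g_T^{*2}$ and define the candidate control law $g_T^1: \mathcal{P}_T^1 \times \mathcal{P}_T^2 \to \mathcal{U}_T^1$ by the composition
$$g_T^1(\Pi_T^1,\Pi_T^2) := \bar{g}_T^1\big(\Pi_T^1,\Pi_T^2, g_T^{*2}(\Pi_T^2)\big).$$
This map is well defined into $\mathcal{U}_T^1$ since $g_T^{*2}(\Pi_T^2) \in \mathcal{U}_T^2$ and $\bar{g}_T^1$ is defined on all of $\mathcal{P}_T^1 \times \mathcal{P}_T^2 \times \mathcal{U}_T^2$. I would then verify the identity by substituting the realized value $U_T^2 = g_T^{*2}(\Pi_T^2)$, which holds for every realization consistent with a strategy in $\mathcal{G}''$, to obtain $g_T^1(\Pi_T^1,\Pi_T^2) = \bar{g}_T^1(\Pi_T^1,\Pi_T^2,U_T^2) = U_T^1$, as claimed.

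I do not expect a serious obstacle: the substance of the lemma is simply the redundancy of $U_T^2$ given $\Pi_T^2$, and the rest is a one-line composition. The only point that needs care is invoking the correct structural result, so that the composition stays consistent with every profile in $\mathcal{G}''$ and does not covertly reintroduce dependence on information unavailable to agent $1$ at time $T$.
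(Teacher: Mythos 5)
Your proposal is correct and matches the paper's proof essentially verbatim: both substitute the relation $U_T^2 = g_T^2(\Pi_T^2)$ (valid for strategies in $\mathcal{G}''$) into the extended law and define $g_T^1(\Pi_T^1,\Pi_T^2) := \bar{g}_T^1\big(\Pi_T^1,\Pi_T^2,g_T^2(\Pi_T^2)\big)$. No gaps.
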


\begin{proof}
The proof follows by substituting the relation $U_T^2 = g_T^2(\Pi_T^2)$ into the extended control law, and constructing $g_T^1(\cdot)$ as $g_T^1(\Pi_T^1,\Pi_T^2) := \bar{g}_T^1\big(\Pi_T^1,\Pi_T^2,g_T^2(\Pi_T^2)\big)$.
\end{proof}

Lemma \ref{lem_extended} establishes that we can equivalently select either $\bar{g}_T^1$ or $g_T^1$ at time $T$, because they yield the same control action $U_T^1$. To this end, we simply denote the control law of agent $1$ at time $T$ by $g_T^1$, even with the extended memory.

\subsection{The Value Functions}

In this subsection, we construct the value functions and corresponding control laws for our DP.
Let $u_t^k$ and $\pi_t^k$ be the realizations of the random variable $U_t^k$ and information state $\Pi_t^k$ for each $k \in \{1,2\}$, for each $t=0,\dots,T$. We recursively define two value functions at time $T$ as
\begin{align}
    &J^1_T(\pi_T^1, u_T^2) := \inf_{u_T^1 \in \mathcal{U}_T^1} \Tilde{c}^1_T(\pi^1_T,u_T^{1:2}), \label{value_T_1} \\
    &J^2_T(\pi_T^2) := \inf_{u_T^2 \in \mathcal{U}_T^2} \mathbb{E}^{(\boldsymbol{g}, \boldsymbol{\psi}^2)}\big[J^1_T(\Pi_T^1,u_T^2)~|~\pi_T^2, u_T^2\big]. \label{value_T_2}
\end{align}
The control law for agent $1$ at time $T$ is $u_T^{*1} = g_T^{*1}(\pi_T^1,u_T^2)$, i.e., the $\arg\inf$ in the RHS of \eqref{value_T_1}. The control law for agent $2$ is $u_T^{*2} = g_T^{*2}(\pi_T^2)$, i.e., the $\arg\inf$ in the RHS of \eqref{value_T_2}. 

Next, at each time $t=T-1, \dots, 0$, we recursively define
\begin{align}
    &J_t(\pi_t^2) := \inf_{u_t^2 \in \mathcal{U}_t^2, \gamma_t^{[2,1]} \in \mathcal{F}_t^{[2,1]}} \tilde{c}_t\big(\pi_t^2,\gamma_t^{[2,1]},u_t^{1}\big) \nonumber \\
    &\quad \quad \quad + \mathbb{E}^{(\boldsymbol{g}, \boldsymbol{\psi}^2)}\Big[{J}_{t+1}\big(\Pi_{t+1}^2 \big) ~|~\pi_t^{2}, \gamma_t^{[2,1]}, u_t^{2}\Big], \label{value_time_t}
\end{align}
where at time $T-1$, by convention $J_T(\Pi_T^2) = J_T^2(\Pi_T^2)$.
The prescription law at time $t$ is $\gamma_t^{*[2,1]} = \psi_t^{*[2,1]}(\pi_t^{2})$ and the control law of agent $2$ is $u_t^{*2} = g_t^{*2}(\pi_t^2)$, i.e., the $\arg\inf$ in the RHS of \eqref{value_time_t}. 
The value functions \eqref{value_T_1}-\eqref{value_time_t} and corresponding control laws form a DP for the team.

\subsection{Optimality of the Dynamic Program}

In this subsection, we prove the optimality of our DP, starting with time $T$. Let $\boldsymbol{g}_t=(g_t^1,g_t^2)$ and $\boldsymbol{g}_{0:t} = (\boldsymbol{g}_0,\dots,\boldsymbol{g}_t)$. Furthermore, let
    $\mathcal{J}_t(\boldsymbol{g}) := \mathbb{E}^{(\boldsymbol{g}, \boldsymbol{\psi}^2)}\left[\sum_{\ell=t}^T c_\ell(X_\ell,U_\ell^{1:2})\right].$
Next, we show that the control law $g_T^{*1}$ is optimal for agent $1$ at time $T$.

\begin{lemma} \label{lem_12}
1) The value function $J_T^1$ in \eqref{value_T_1} is such that
\begin{align}
    \mathcal{J}_T(\boldsymbol{g}) 
    \geq &\mathbb{E}^{(\boldsymbol{g}, \boldsymbol{\psi}^2)}[J_T^1(\Pi^1_T,U_T^2)], \quad \forall \boldsymbol{g} \in \mathcal{G}''.
\end{align}

2) The corresponding control law $g_T^{*1}$ is such that
\begin{gather}
    \mathcal{J}_T(\boldsymbol{g}_{0:T-1},g_T^{*1},g_T^2) = \mathbb{E}^{(\boldsymbol{g}, \boldsymbol{\psi}^2)}[J_T^1(\Pi^1_T,U_T^2)].
\end{gather}
\end{lemma}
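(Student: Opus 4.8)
The plan is to use the observation that $\mathcal{J}_T(\boldsymbol{g})$ is a single-stage terminal cost, since the sum defining $\mathcal{J}_t$ collapses to the $\ell = T$ term: $\mathcal{J}_T(\boldsymbol{g}) = \mathbb{E}^{(\boldsymbol{g},\boldsymbol{\psi}^2)}[c_T(X_T,U_T^{1:2})]$. First I would condition on $(M_T^1, U_T^{1:2})$ and use the tower property, then apply \eqref{eq_lem_pi_cost_1} from Lemma \ref{lem_pi_cost} to replace the inner conditional expectation by $\tilde{c}_T^1(\Pi_T^1, U_T^{1:2})$. This gives the working identity $\mathcal{J}_T(\boldsymbol{g}) = \mathbb{E}^{(\boldsymbol{g},\boldsymbol{\psi}^2)}[\tilde{c}_T^1(\Pi_T^1, U_T^{1:2})]$, which is the common starting point for both parts.

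For Part 1, I would appeal to the definition $J_T^1(\pi_T^1, u_T^2) = \inf_{u_T^1} \tilde{c}_T^1(\pi_T^1, u_T^1, u_T^2)$ in \eqref{value_T_1}. Because the realized action $U_T^1$ generated by any $\boldsymbol{g} \in \mathcal{G}''$ is one feasible choice of $u_T^1$, the pointwise bound $\tilde{c}_T^1(\Pi_T^1, U_T^1, U_T^2) \geq J_T^1(\Pi_T^1, U_T^2)$ holds almost surely, and taking expectations yields the desired inequality $\mathcal{J}_T(\boldsymbol{g}) \geq \mathbb{E}^{(\boldsymbol{g},\boldsymbol{\psi}^2)}[J_T^1(\Pi_T^1, U_T^2)]$.

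For Part 2, I would instead select the control law $g_T^{*1}$, which by construction attains the infimum in \eqref{value_T_1}, so that $\tilde{c}_T^1(\Pi_T^1, g_T^{*1}(\Pi_T^1, U_T^2), U_T^2) = J_T^1(\Pi_T^1, U_T^2)$ pointwise. Substituting this equality into the working identity gives $\mathcal{J}_T(\boldsymbol{g}_{0:T-1}, g_T^{*1}, g_T^2) = \mathbb{E}^{(\boldsymbol{g},\boldsymbol{\psi}^2)}[J_T^1(\Pi_T^1, U_T^2)]$.

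The step I expect to require the most care is verifying, in Part 2, that replacing agent $1$'s terminal control law $g_T^1$ by $g_T^{*1}$ does not change the joint law of $(\Pi_T^1, U_T^2)$, so that both expectations are taken against the same distribution. I would argue this by causality: the information state $\Pi_T^1 = \mathbb{P}^{\boldsymbol{g}}(X_T \mid M_T^1)$ is a function of $M_T^1$, whose joint law with $X_T$ depends only on the primitives and the strategies $\boldsymbol{g}_{0:T-1}$ through time $T-1$; moreover $U_T^2 = g_T^2(\Pi_T^2)$ is held fixed. Since $U_T^1$ is produced only after $\Pi_T^1$ and $U_T^2$ have been determined and does not enter $X_T$ or $M_T^1$, the terminal control law of agent $1$ influences neither quantity, and the distribution of $(\Pi_T^1, U_T^2)$ is invariant under the swap.
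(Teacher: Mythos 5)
Your proposal is correct and follows essentially the same route as the paper: both rewrite $\mathcal{J}_T(\boldsymbol{g})$ as $\mathbb{E}^{(\boldsymbol{g},\boldsymbol{\psi}^2)}[\tilde{c}_T^1(\Pi_T^1,U_T^{1:2})]$ via Lemma \ref{lem_pi_cost}, obtain Part 1 from the pointwise bound $\tilde{c}_T^1(\Pi_T^1,U_T^{1:2}) \geq J_T^1(\Pi_T^1,U_T^2)$ implied by the infimum in \eqref{value_T_1}, and obtain Part 2 by substituting the minimizing law $g_T^{*1}$. Your closing observation that the joint law of $(\Pi_T^1,U_T^2)$ is unaffected by swapping agent $1$'s terminal control law is a point the paper leaves implicit, and it is argued correctly.
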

\begin{proof}
1) Using the extended memory of agent $1$ at time $T$, $U_T^1 = g_T^1(\Pi_T^{1:2},U_T^{2})$. Using Lemma \ref{lem_pi_cost}, we write that
\begin{align*}
    \mathcal{J}_T(\boldsymbol{g}) 
    &= \mathbb{E}^{(\boldsymbol{g}, \boldsymbol{\psi}^2)}\big[\tilde{c}_T^1(\Pi_T^1,U_T^{1:2})\big] 
    \geq \mathbb{E}^{(\boldsymbol{g}, \boldsymbol{\psi}^2)}\big[J_T^1(\Pi_T^1,U_T^2)\big], \label{lem_12_1}
\end{align*}
where, in the inequality, we used the definition of $J_T^1$ in \eqref{value_T_1}.

2) We substitute $U_T^1 = g_T^{*1}(\Pi_T^1,U_T^2)$ in the expansion of $\mathcal{J}_T$, to write that
\begin{align}
     \mathcal{J}_T(\boldsymbol{g}_{0:T-1},g_T^{*1},g_T^2) &=\mathbb{E}^{(\boldsymbol{g}, \boldsymbol{\psi}^2)}\big[\tilde{c}^1_T(\Pi^1_T, g_T^{*1}(\Pi_T^1,U_T^2), U_T^{2})\big] \nonumber \\
     &= \mathbb{E}^{(\boldsymbol{g}, \boldsymbol{\psi}^2)}[J_T^1(\Pi_T,U_T^2)],
\end{align}
where, in the second equality, we expand the expectation in the LHS, substitute the definitions of $g_T^{*1}$ and $J_T^1$ from \eqref{value_T_1}, and note that this yields the expectation in the RHS.
\end{proof}

Next, we show that, given the control law $g_T^{*1}$ for agent $1$, the control law $g_T^{*2}$ is optimal for agent $2$ at time $T$. 


\begin{lemma} \label{lem_13}
    1) The value function $J_T^2$ in \eqref{value_T_2} is such that
\begin{align}
    \mathcal{J}_T(\boldsymbol{g}) \geq \mathbb{E}^{(\boldsymbol{g}, \boldsymbol{\psi}^2)}[J_T^2(\Pi^2_T)], \quad \forall \boldsymbol{g} \in \mathcal{G}''.
\end{align}

2) The corresponding control law $g_T^{*2}$ is such that
\begin{gather}
    \mathcal{J}_T(\boldsymbol{g}_{0:T-1},g_T^{*1},g_T^{*2}) = \mathbb{E}^{(\boldsymbol{g}, \boldsymbol{\psi}^2)}[J_T^2(\Pi^2_T)].
\end{gather}
\end{lemma}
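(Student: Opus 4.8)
The plan is to build directly on Lemma \ref{lem_12}, which already reduces $\mathcal{J}_T(\boldsymbol{g})$ to the agent-$1$ value function, and then to optimize over $U_T^2$ through the definition of $J_T^2$ in \eqref{value_T_2}. The argument for both parts is the standard dynamic-programming pattern: an inequality from the infimum for an arbitrary strategy, and a matching equality for the minimizing control law.

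For the first part, I would start from the inequality $\mathcal{J}_T(\boldsymbol{g}) \geq \mathbb{E}^{(\boldsymbol{g}, \boldsymbol{\psi}^2)}[J_T^1(\Pi_T^1, U_T^2)]$ established in the first part of Lemma \ref{lem_12}, and apply the tower property of conditional expectation by conditioning on $(M_T^2, \Pi_T^2, U_T^2)$. The crucial interim step is to show that this conditional expectation depends on $M_T^2$ only through $\Pi_T^2$, i.e.
\[
\mathbb{E}^{(\boldsymbol{g}, \boldsymbol{\psi}^2)}[J_T^1(\Pi_T^1, U_T^2) \mid M_T^2, \Pi_T^2, U_T^2] = \mathbb{E}^{(\boldsymbol{g}, \boldsymbol{\psi}^2)}[J_T^1(\Pi_T^1, U_T^2) \mid \Pi_T^2, U_T^2].
\]
I would prove this by writing $\Pi_T^1 = e_T(\Pi_T^2, L_T^{[1,2]})$ via Lemma \ref{pi_rel}, so that the inner expectation becomes a sum over realizations of $L_T^{[1,2]}$ weighted by $\mathbb{P}(L_T^{[1,2]} \mid M_T^2, \Gamma_{0:T-1}^{[2,1]})$; by the definition of the information state $\Pi_T^2$ in \eqref{pi_2_def}, this marginal is itself a function of $\Pi_T^2$ alone, so $M_T^2$ drops out of the conditioning. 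With this equivalence in hand, the definition of $J_T^2$ as the infimum over $u_T^2$ gives $\mathbb{E}[J_T^1(\Pi_T^1, U_T^2) \mid \Pi_T^2, U_T^2] \geq J_T^2(\Pi_T^2)$ pointwise, and taking the outer expectation and chaining with Lemma \ref{lem_12} yields the first part.

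For the second part, I would invoke the second part of Lemma \ref{lem_12}, which gives $\mathcal{J}_T(\boldsymbol{g}_{0:T-1}, g_T^{*1}, g_T^2) = \mathbb{E}[J_T^1(\Pi_T^1, U_T^2)]$ for any $g_T^2$; specializing to $g_T^2 = g_T^{*2}$ with $U_T^2 = g_T^{*2}(\Pi_T^2)$, the same conditioning identity reduces the right-hand side to $\mathbb{E}\big[\mathbb{E}[J_T^1(\Pi_T^1, U_T^2) \mid \Pi_T^2, U_T^2]\big]$. Since $g_T^{*2}$ is by construction the $\arg\inf$ attaining $J_T^2$ in \eqref{value_T_2}, the inner expectation equals $J_T^2(\Pi_T^2)$, and taking the outer expectation completes the second part.

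The main obstacle is the sufficiency identity displayed above, namely showing that the conditional expectation of $J_T^1$ depends on the memory $M_T^2$ only through the information state $\Pi_T^2$. This is precisely where the information-state property of $\Pi_T^2$ must be exploited through Lemma \ref{pi_rel} and the definition \eqref{pi_2_def}; once it is in place, everything else reduces to routine applications of the tower property and the definition of the infimum.
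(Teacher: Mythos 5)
Your proposal is correct and follows essentially the same route as the paper: chain the inequality and equality from Lemma \ref{lem_12}, apply the tower property conditioning on $(\Pi_T^2, U_T^2)$, and invoke the infimum (respectively the $\arg\inf$) in the definition \eqref{value_T_2} of $J_T^2$. The only difference is that you explicitly justify why the conditional expectation of $J_T^1$ given $(M_T^2,\Pi_T^2,U_T^2)$ depends on $M_T^2$ only through $\Pi_T^2$ via Lemma \ref{pi_rel} and \eqref{pi_2_def} --- a step the published proof leaves implicit in its use of the law of iterated expectations --- so your write-up is, if anything, slightly more complete.
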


\begin{proof}
1) Agent $2$ at time $T$ generates her action $U_T^2$ as a function of $\Pi^2_T$. Using Lemma \ref{lem_12}, for all $\boldsymbol{g} \in \mathcal{G}''$,
\begin{align}
    \mathcal{J}_T(\boldsymbol{g}) &\geq \mathbb{E}^{(\boldsymbol{g}, \boldsymbol{\psi}^2)}[J_T^1(\Pi^1_T,U_T^2)] \nonumber \\
    &= \mathbb{E}^{(\boldsymbol{g}, \boldsymbol{\psi}^2)}\big[\mathbb{E}^{(\boldsymbol{g}, \boldsymbol{\psi}^2)}[J_T^1(\Pi^1_T,U_T^2)|\Pi_T^2,U_T^2]\big] \nonumber \\
    &\geq \mathbb{E}^{(\boldsymbol{g}, \boldsymbol{\psi}^2)}\big[J_T^2(\Pi^2_T)\big], \label{lem_13_1}
\end{align}
where, in the equality, we use the law of iterated expectations, and in the second inequality, we use the definition of $J_T^2$ from \eqref{value_T_2}.

2) Starting with the equality in \eqref{lem_13_1}, we substitute $U_T^1 = g_T^{*2}(\Pi_T^1)$ to write that
\begin{align}
    & \mathcal{J}_T(\boldsymbol{g}_{0:T-1},g_T^{*1},g_T^{*2}) \nonumber \\
    &= \mathbb{E}^{(\boldsymbol{g}, \boldsymbol{\psi}^2)}\big[\mathbb{E}^{(\boldsymbol{g}, \boldsymbol{\psi}^2)}[J_T^1(\Pi^1_T,g_T^{*2}(\Pi_T^2)~|~\Pi_T^2]\big] \nonumber \\
    &=\mathbb{E}^{(\boldsymbol{g}, \boldsymbol{\psi}^2)}\big[J_T^2(\Pi^2_T)\big],
\end{align}
where, in the second equality, we expand the expectation in the LHS, substitute the definitions of $g^{*2}_T$ and $J_T^2$ from \eqref{value_T_2}, and note that this yields the expectation in the RHS.
\end{proof}

Next, we show that the laws $(\psi_t^{*[2,1]},g_t^{*2})$ are optimal for each $t=0,\dots,T-1$.

    %

\begin{theorem} \label{lemma_15}
For any $t=0,\dots,T-1$:

1) The value function $J_t$ in \eqref{value_time_t} is such that
\begin{align}
    \mathcal{J}_t(\boldsymbol{g})
    \geq &\mathbb{E}^{(\boldsymbol{g}, \boldsymbol{\psi}^2)}\big[J_t(\Pi^{2}_t)\big], \quad \forall \boldsymbol{g} \in \mathcal{G}''.
\end{align}

2) The corresponding laws $(\psi_t^{*[2,1]}, g_t^{*2})$ are such that
\begin{gather} \label{lem_15_condition_2}
    \mathcal{J}_t(\boldsymbol{g}_{0:t-1},g_{t}^{*1},g_t^{*2}, ,\boldsymbol{g}_{t+1:T}^{*}) = \mathbb{E}^{(\boldsymbol{g}, \boldsymbol{\psi}^2)}[J_t(\Pi^{2}_t)],
\end{gather}
where $g_t^{*1}$ is derived from $\psi_t^{*[2,1]}$ using Lemma \ref{lem_psi_g_relation_inv}.
\end{theorem}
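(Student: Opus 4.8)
The plan is to establish both parts simultaneously by backward induction on $t$, taking Lemma~\ref{lem_13} as the base case: since the convention $J_T(\Pi_T^2) = J_T^2(\Pi_T^2)$ identifies the terminal value function with $J_T^2$, Lemma~\ref{lem_13} is exactly the asserted statement at the terminal stage. So I would assume both parts hold at time $t+1$ and derive them at time $t$.

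For part 1, I would begin by splitting the cost-to-go additively, $\mathcal{J}_t(\boldsymbol{g}) = \mathbb{E}^{(\boldsymbol{g}, \boldsymbol{\psi}^2)}[c_t(X_t,U_t^{1:2})] + \mathcal{J}_{t+1}(\boldsymbol{g})$, and invoking the induction hypothesis to bound the tail below by $\mathbb{E}^{(\boldsymbol{g}, \boldsymbol{\psi}^2)}[J_{t+1}(\Pi_{t+1}^2)]$. The instantaneous cost must then be collapsed onto the information states by iterated expectations: conditioning on $(M_t^1,U_t^{1:2})$ and applying \eqref{eq_lem_pi_cost_1} gives $\tilde{c}_t^1(\Pi_t^1,U_t^{1:2})$, and a further conditioning on $(M_t^2,\Gamma_t^{[2,1]},U_t^2)$ with \eqref{eq_lem_pi_cost_2} gives $\tilde{c}_t^2(\Pi_t^2,\Gamma_t^{[2,1]},U_t^2)$. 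The tail term requires an analogous reduction of its conditioning from the full memory $M_t^2$ down to $(\Pi_t^2,\Gamma_t^{[2,1]},U_t^2)$. Once both terms match the summand inside the infimum in \eqref{value_time_t}, I would bound the conditional expectation below by that infimum pointwise in $\Pi_t^2$ and take the outer expectation, recovering $\mathbb{E}^{(\boldsymbol{g}, \boldsymbol{\psi}^2)}[J_t(\Pi_t^2)]$.

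The step I expect to be the main obstacle is exactly this reduction of the conditioning on the future value function, namely establishing $\mathbb{E}^{(\boldsymbol{g}, \boldsymbol{\psi}^2)}[J_{t+1}(\Pi_{t+1}^2) \mid M_t^2,\Pi_t^2,\Gamma_t^{[2,1]},U_t^2] = \mathbb{E}^{(\boldsymbol{g}, \boldsymbol{\psi}^2)}[J_{t+1}(\Pi_{t+1}^2) \mid \Pi_t^2,\Gamma_t^{[2,1]},U_t^2]$. This is precisely where Lemma~\ref{pi_2_1} is indispensable: since $\Pi_{t+1}^2 = \tilde{f}_t^2(\Pi_t^2,\Gamma_t^{[2,1]},U_t^2,Z_{t+1}^2)$ and the conditional law of $\Pi_{t+1}^2$ given the full memory depends on the memory only through $(\Pi_t^2,\Gamma_t^{[2,1]},U_t^2)$, the remaining part of $M_t^2$ can be discarded. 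The delicate point is that $\Pi_t^2$ is itself a function of $M_t^2$, so the reduction has to be routed through the transition kernel of Lemma~\ref{pi_2_1} rather than asserted by a naive Markov argument.

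For part 2, I would rerun the same chain with every inequality replaced by an equality. Under the laws $(\psi_t^{*[2,1]},g_t^{*2})$ the realized prescription and action coincide with the $\arg\inf$ in \eqref{value_time_t}, so the bound on the time-$t$ summand is attained; and applying the \emph{equality} half of the induction hypothesis to the tail strategy $\boldsymbol{g}_{t+1:T}^{*}$ gives $\mathcal{J}_{t+1}(\cdot) = \mathbb{E}^{(\boldsymbol{g}, \boldsymbol{\psi}^2)}[J_{t+1}(\Pi_{t+1}^2)]$. Chaining these two tight relations yields \eqref{lem_15_condition_2}, with $g_t^{*1}$ recovered from $\psi_t^{*[2,1]}$ through Lemma~\ref{lem_psi_g_relation_inv}. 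That the minimizers can legitimately be taken as functions of $\Pi_t^2$ alone, so that $(\psi_t^{*[2,1]},g_t^{*2})$ is an admissible pair of laws, is guaranteed by Theorem~\ref{pbp_struct_result_2}.
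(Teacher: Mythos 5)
Your proposal is correct and is essentially the paper's intended argument: the paper's own proof is only a one-line sketch deferring to the standard backward-induction argument for the centralized DP of Problem~\ref{problem_3}, and your induction --- base case from Lemma~\ref{lem_13} via the convention $J_T = J_T^2$, instantaneous cost collapsed by Lemma~\ref{lem_pi_cost}, and the conditioning on the tail term reduced via the transition kernel of Lemma~\ref{pi_2_1} --- is exactly that argument written out in full. The only cosmetic difference is that the paper describes the induction as ``starting with time $T-1$'' whereas you anchor the base case at $T$; these are equivalent.
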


\begin{proof}
Note the DP for time steps $t = 0, \dots, T-1$ is the same as a centralized DP for Problem \ref{problem_3}. The optimality of such a DP can be proven using mathematical induction starting with time $T-1$ in a manner similar to \cite{malikopoulos2021optimal, nayyar2019common}. 
\end{proof}

\begin{remark}
At time $T$, our DP has two sub-steps, each with a different value function. These sub-steps take advantage of the nested information structure to directly compute the control laws at time $T$, which involves solving two optimization problems with respect to control actions. This is simpler than solving an optimization problem involving a prescription of agent $2$ for agent $1$, as in time steps $t=0,\dots,T-1$. Thus, our DP presents a simpler solution for the final time step.
\end{remark}

\section{Discussion and Conclusions}

In this paper, we introduced a dynamic team of two agents with a nested information structure and derived structural results for the optimal control strategies. Our derivation utilized a combination of the person-by-person and prescription approaches to arrive at a distinct structural form that cannot be achieved by either of the techniques alone.
We also presented a DP that can be used to derive the optimal control strategies for a finite time horizon. Our DP utilized the nested information structure to simplify the computation of optimal control laws for the team at the final time step.
Note that our results can be extended to teams of $n \in \mathbb{N}$ agents with nested information, 
by iteratively applying the person-by-person and the prescription approach. While our results do not yield a time invariant domain for optimal control strategies, their advantage is that they only require tracking probability distributions over finite valued supports. Thus, in comparison to related results in \cite{nayyar2011structure, Mahajan2015}, it may be easier to derive approximate strategies using our results. 
Furthermore, there may be systems with specific dynamics where we can divide the DP into multiple sub-steps at each time step. Our ongoing work seeks to extend our results to more general information structures and to investigate decentralized minimax control problems using these techniques like those reported in \cite{gagrani2017decentralized}.

\bibliographystyle{ieeetr}
\bibliography{References}

\section*{Appendix A - Proof of Lemma \ref{pi_rel}}

Let $x_t$, $\gamma_t^{[2,1]}$, ${s}_t^{2}$, $m_t^k$, $l_t^{[1,2]}$, and $\pi_t^k$ be the realizations of the random variables $X_t$, $\Gamma_t^{[2,1]}$, ${S}_t^{2}$, $M_t^k$, $L_t^{[1,2]}$, and the conditional probability distribution $\Pi_t^k$, respectively, for all $k \in \{1,2\}$, for all $t=0,\dots,T$. Using \eqref{pi_1_with_gamma},
\begin{align}
    \pi_t^1(x_t) &= \mathbb{P}^{(\boldsymbol{g}, \boldsymbol{\psi}^2)} (X_t = x_t ~|~ m_t^1, \gamma_{0:t-1}^{[2,1]}) \nonumber \\
    &= \mathbb{P}^{(\boldsymbol{g}, \boldsymbol{\psi}^2)} \big(X_t = x_t ~|~ m_t^2, l_t^{[1,2]}, \gamma_{0:t-1}^{[2,1]}\big).
    \label{eq_lem_1_m1}
\end{align}
We use Bayes' rule to expand \eqref{eq_lem_1_m1} as
\begin{multline}
    \mathbb{P}^{(\boldsymbol{g}, \boldsymbol{\psi}^2)} \big(X_t = x_t ~|~ m_t^2, l_t^{[1,2]}, \gamma_{0:t-1}^{[2,1]}\big)  \\
    = \dfrac{\pi_t^2({s}_t^2)} {\mathbb{P}^{(\boldsymbol{g}, \boldsymbol{\psi}^2)} \big(L_t^{[1,2]} = l_t^{[1,2]} ~|~ m_t^2, \gamma_{0:t-1}^{[2,1]}\big)}, \label{eq_lem_1_0}
\end{multline}
where $\pi_t^2(s_t^2) = \mathbb{P}^{(\boldsymbol{g}, \boldsymbol{\psi}^2)} (S_t^2 = s_t^2~|~ m_t^2, \gamma_{0:t-1}^{[2,1]})$ and ${s}_t^{2} = \big\{x_t,l_t^{[1,2]}\big\}$.
We expand the denominator in \eqref{eq_lem_1_0} using the law of total probability, as
\begin{align}
    \mathbb{P}^{(\boldsymbol{g}, \boldsymbol{\psi}^2)} &\big(L_t^{[1,2]} = l_t^{[1,2]} ~|~ m_t^2, \gamma_{0:t-1}^{[2,1]}\big) \nonumber \\
    = \sum_{\tilde{s}^2_t \in \Tilde{\mathcal{S}}^2_t} &\mathbb{P}^{(\boldsymbol{g}, \boldsymbol{\psi}^2)} \big(\tilde{s}^2_t, l_t^{[1,2]} ~| ~ m_t^2, \gamma_{0:t-1}^{[2,1]}\big), \nonumber \\
    = \sum_{\tilde{s}^2_t \in \Tilde{\mathcal{S}}^2_t} &\mathbb{P}^{(\boldsymbol{g}, \boldsymbol{\psi}^2)} \big(L_t^{[1,2]} = l_t^{[1,2]} ~| ~ \tilde{s}^2_t, m_t^2, \gamma_{0:t-1}^{[2,1]}\big) \nonumber \\
    &\cdot \mathbb{P}^{(\boldsymbol{g}, \boldsymbol{\psi}^2)} \big(S_t^2 = \tilde{s}^2_t ~| ~ m_t^2, \gamma_{0:t-1}^{[2,1]}\big), \nonumber \\
    = \sum_{\tilde{s}^2_t \in \Tilde{\mathcal{S}}^2_t} &\mathbb{I}\big({l}_t^{[1,2]} \in \tilde{s}_t^{2}\big)\cdot \pi_t^2(\tilde{s}_t^2), \label{eq_lem_1_2}
\end{align}
where, in the second equality, we use Bayes' law, $\pi_t^2(\tilde{s}_t^2) = \mathbb{P}^{(\boldsymbol{g}, \boldsymbol{\psi}^2)}\big({S}_t^2 = \tilde{s}_t^2 ~|~ m_t^2, \gamma_{0:t-1}^{[2,1]}\big)$, and $\mathbb{I}(\cdot)$ is the indicator function.
The result holds by substituting \eqref{eq_lem_1_0}-\eqref{eq_lem_1_2} into \eqref{eq_lem_1_m1} and constructing an appropriate function ${e}_t(\cdot)$.

\section*{Appendix B - Proof of Lemma \ref{pi_2_1}}

Let $x_t$, $u_t^k$, ${s}_t^{2}$, $m_t^k$, $l_t^{[1,2]}$, $\gamma_t^{[2,1]}$, and $\pi_t^k$ be the realizations of the random variables $X_t$, $U_t^k$, ${S}_t^{2}$, $M_t^k$, $L_t^{[1,2]}$, $\Gamma_t^{[2,1]}$, and the conditional probability distribution $\Pi_t^k$, respectively. Let $(\boldsymbol{g}, \boldsymbol{\psi}^2)$ be the strategy profile for the team.
Then, by definition \eqref{pi_2_def},
\begin{gather} 
    \pi^{{2}}_{t+1}(s_{t+1}^2) = \mathbb{P}^{(\boldsymbol{g},\boldsymbol{\psi}^{{2}})}\big(S_{t+1}^2 = s_{t+1}^2 ~|~ m^{{2}}_{t+1},\gamma_{0:t}^{{[2,1]}}\big) \nonumber \\
    = \mathbb{P}^{(\boldsymbol{g},\boldsymbol{\psi}^{{2}})}\big(S_{t+1}^2 = s_{t+1}^2 ~|~ \pi_t^2, m^{{2}}_{t+1}, \gamma_{0:t}^{{[2,1]}}\big), \label{lem_6_1}
\end{gather}
where we can add the distribution $\pi_t^2$ to the conditioning because it is known given $(m^{{2}}_{t}, \gamma_{0:t-1}^{{[2,1]}})$ and $(\boldsymbol{g},\boldsymbol{\psi}^{{2}})$. We know that $s_{t+1}^2 = \big\{x_{t+1},l^{[1,2]}_{t+1}\big\}$, where $l^{[1,2]}_{t+1} = l^{[1,2]}_t \cup \{y_{t+1}^1,u_t^1\}$. Then, we can use \eqref{st_eq}, \eqref{ob_eq}, and Lemma \ref{pi_rel} to construct an appropriate function $\phi_t(\cdot)$ such that
\begin{gather} \label{lem_6_2}
    s_{t+1}^2 = \phi_t\Big(s^2_t, \pi_t^2, \gamma_t^{[2,1]},u_t^2, w_t, v_{t+1}^{1:2}\Big).
\end{gather}
We can substitute \eqref{lem_6_2} into \eqref{lem_6_1} to state that
\begin{gather}
    \pi^{{2}}_{t+1}(s_{t+1}^2) = \sum_{s_{t}^2, w_t, v_{t+1}^{1:2}} \mathbb{I}\Big(\phi_t\big(s_{t}^2,\pi_t^2, \gamma_t^{[2,1]}, u_t^2, w_t, v_{t+1}^{1:2}\big)  \nonumber \\
    = s_{t+1}^2\Big) \cdot \mathbb{P}(w_t, v_{t+1}^{1:2}) \cdot \mathbb{P}^{(\boldsymbol{g},\boldsymbol{\psi}^{{2}})}\big(S_{t}^2 = s_{t}^2| \pi_t^2, m^{{2}}_{t+1}, \gamma_{0:t}^{{[2,1]}}\big), \label{lem_6_3}
\end{gather}
where $m_{t+1}^2 = \{m_t^2, z_{t+1}^2\}$. Then, the last term in \eqref{lem_6_3} can be written as
\begin{multline}
    \mathbb{P}^{(\boldsymbol{g},\boldsymbol{\psi}^{{2}})}\Big(S_{t}^2 = s_{t}^2 ~|~ \pi_t^2, m^{{2}}_{t}, z_{t+1}^2, \gamma_{0:t}^{{[2,1]}}\Big)  \\
    = \dfrac{\left[\splitfrac{\mathbb{P}^{(\boldsymbol{g},\boldsymbol{\psi}^{{2}})}\big(Z^2_{t+1}=z^2_{t+1}~|~s_{t}^2,\pi_t^2,m^2_{t},\gamma_{0:t}^{{[2,1]}}\big)}
    {\cdot\mathbb{P}^{(\boldsymbol{g},\boldsymbol{\psi}^{{2}})}\big(s_{t}^2,m^2_{t},\pi_t^2,\gamma_{0:t}^{[2,1]}\big)}\right]}
    {\left[\sum_{\tilde{s}^2_t}{\splitfrac{\mathbb{P}^{(\boldsymbol{g},\boldsymbol{\psi}^{{2}})}\big(Z^2_{t+1}=z^2_{t+1}~|~\tilde{s}^2_t, \pi_t^2 ,m^2_{t},\gamma_{0:t}^{{[2,1]}}\big)}
    {\cdot \mathbb{P}^{(\boldsymbol{g},\boldsymbol{\psi}^{{2}})}\big(\tilde{s}^2_t,m^2_{t},\pi_t^2, \gamma_{0:t}^{[2,1]}\big)}}\right]}. \label{lem_6_4}
\end{multline}
Note that using Bayes' law, it holds that
\begin{multline}
   \mathbb{P}^{(\boldsymbol{g}, \boldsymbol{\psi}^2)}\big(s^2_t,m^2_{t},\pi_t^2,\gamma_{0:t}^{[2,1]}\big) \nonumber \\
    = \mathbb{P}^{(\boldsymbol{g}, \boldsymbol{\psi}^2)}\big(S_t^2 = s_t^2~|~ m^2_{t},\gamma_{0:t-1}^{[2,1]}\big) \cdot \mathbb{P}^{(\boldsymbol{g}, \boldsymbol{\psi}^2)}\big(\pi_t^2,m^2_{t},\gamma_{0:t}^{[2,1]}\big),
\end{multline}
where $\pi_t^2, \gamma_{t}^{[2,1]}$ are known given $(m_t^2, \gamma_{0:t-1}^{[2,1]})$ and $(\boldsymbol{g},\boldsymbol{\psi}^{{2}})$. Substituting into \eqref{lem_6_4},
\begin{multline}
    \mathbb{P}^{(\boldsymbol{g},\boldsymbol{\psi}^{{2}})}\Big(S_{t}^2 = s_{t}^2 ~|~ \pi_t^2, m^{{2}}_{t}, z_{t+1}^2, \gamma_{0:t}^{{[2,1]}}\Big) \\
    = \dfrac{\mathbb{P}^{(\boldsymbol{g},\boldsymbol{\psi}^{{2}})}\big(Z^2_{t+1}=z^2_{t+1}|s_{t}^2,\pi_t^2,m^2_{t},\gamma_{0:t}^{{[2,1]}}\big)
    \cdot \pi_t^2(s_{t}^2)}
    {\left[\sum_{\tilde{s}^2_t}{\splitfrac{\mathbb{P}^{(\boldsymbol{g},\boldsymbol{\psi}^{{2}})}\big(Z^2_{t+1}=z^2_{t+1}|\tilde{s}_{t}^2, \pi_t^2 ,m^2_{t},\gamma_{0:t}^{{[2,1]}}\big)}
    {\cdot \pi_t^2(\tilde{s}^2_t)}}\right]}. \label{lem_6_5}
\end{multline}
Next, we can use \eqref{z_2}, in addition to \eqref{st_eq}, \eqref{ob_eq}, and Lemma \ref{pi_rel}, to construct an appropriate function $\hat{\phi}_t(\cdot)$ such that
\begin{gather} \label{proof_2_2}
    z_{t+1}^2 = \hat{\phi}_t\big(s_t^2,\pi_t^2,\gamma_t^{[2,1]}, u_t^2, w_t, v_{t+1}^{1:2}\big),
\end{gather}
which implies that
\begin{multline}
    \mathbb{P}^{(\boldsymbol{g},\boldsymbol{\psi}^{{2}})}(Z^2_{t+1}=z^2_{t+1}|s^2_t,\pi_t^2,m^2_{t},\gamma_{0:t}^{{[2,1]}}) = \sum_{w_t,v_{t+1}^{1:2}}\mathbb{I}\Big( \\
    \hat{\phi}_t(s^2_t,\pi_t^2,\gamma_t^{[2,1]}, u_t^2, w_t, v_{t+1}^{1:2}) = z_{t+1}^2\Big)
    \cdot \mathbb{P}(w_t, v_{t+1}^{1:2}\big). \label{lem_6_6}
\end{multline}
The first result holds by substituting \eqref{lem_6_5} and \eqref{lem_6_6} into \eqref{lem_6_3}.

Furthermore, for some Borel subset $P^2 \subseteq \mathcal{P}_t^2$,
\begin{multline}
    \mathbb{P}(\Pi_{t+1}^2 \in P^2~|~ m_t^2, \gamma^{[2,1]}_{0:t},\pi^2_{0:t})
    =\sum_{z^2_{t+1}}\mathbb{I}\Big[\tilde{f}_{t+1}^2(\pi_t^2,\gamma^{[2,1]}_{t}, \\
    u^2_t, z_{t+1}^2)
    \in P\Big]
    \cdot\mathbb{P}(Z^2_{t+1}=z^{{k}}_{t+1}~|~m_t^2,\gamma^{[2,1]}_{0:t},\pi_{0:t}^2)\label{proof_2_1},
\end{multline}
where $\mathbb{I}(\cdot)$ is the indicator function. Using \eqref{proof_2_2}, the second term in \eqref{proof_2_1} can be expanded as
\begin{multline}
    \mathbb{P}(Z^2_{t+1}=z^2_{t+1}~|~m_t^2,\gamma^{[2,1]}_{0:t},\pi_{0:t}^2) \\
    =\sum_{s_t^2, w_t, v_{t+1}^{1:2}}\mathbb{I}\Big[\tilde{\phi}_t(s_t^2,\pi_t^2,\gamma_t^{[2,1]}, u_t^2, w_t, v_{t+1}^{1:2}) = z_{t+1}^2 \Big]\cdot\mathbb{P}(V_{t+1}^{{1:2}} \\ =v_{t+1}^{{1:2}})
    \cdot \mathbb{P}(W_t = w_t)\cdot\mathbb{P}(S_t^2=s_t^2~|~m_t^2,\gamma_{0:t}^{[2,1]}, u_{0:t}^2,\pi_{0:t}^2) \\
    =\sum_{s_t^2, w_t, v_{t+1}^{1:2}}\mathbb{I}\Big[\tilde{\phi}_t(s_t^2,\pi_t^2,\gamma_t^{[2,1]}, u_t^2, w_t, v_{t+1}^{1:2}) = z_{t+1}^2 \Big]\cdot\mathbb{P}(V_{t+1}^{{1:2}} \\ =v_{t+1}^{{1:2}})
    \cdot \pi_t^2(s_t^2)
    \label{proof_2_1a}.
\end{multline}
Substituting \eqref{proof_2_1a} into \eqref{proof_2_1}, the proof is complete.



\end{document}